\documentclass{article}

\usepackage{amsmath,amssymb,amsfonts,amsthm,graphicx, float, lineno, hyperref, authblk,xcolor}
\usepackage[utf8]{inputenc}
\usepackage{hyperref}
\usepackage{mathtools}

\newtheorem{theorem}{Theorem}[section]
\newtheorem{lemma}[theorem]{Lemma}
\newtheorem{proposition}[theorem]{Proposition}

\newtheorem{definition}[theorem]{Definition}
\newtheorem{remark}[theorem]{Remark}

\numberwithin{equation}{section}

\title{Hausdorff Dimension of Growth Rate Level Sets in $\theta$-expansions}

\author[1]{Andreas Rusu}
\affil[1]{Faculty of Applied Sciences, National University of Science and Technology POLITEHNICA Bucharest, Splaiul Independentei 313, 060042 Bucharest, Romania\\ 
e-mail: \href{mailto: andreasrusu10@gmail.com}{andreasrusu10@gmail.com}}

\author[2,3]{Gabriela Ileana Sebe}
\affil[2]{Faculty of Applied Sciences, National University of Science and Technology POLITEHNICA Bucharest, Splaiul Independentei 313, 060042 Bucharest, Romania }
\affil[3]{Gheorghe Mihoc-Caius Iacob Institute of Mathematical Statistics and Applied Mathematics of the Romanian Academy, Calea 13 Sept. 13, 050711 Bucharest, Romania\\ e-mail: \href{mailto: igsebe@yahoo.com}{igsebe@yahoo.com}}

\date{September 2025}

\begin{document}

\maketitle

\begin{abstract}
We investigate the Hausdorff dimension of level sets defined by digit growth rates in $\theta$-expansions, a generalization of regular continued fractions. For any $\alpha \geq 0$, we prove that the set
\[
E_\theta(\alpha) = \left\{ x \in [0, \theta] \setminus \mathbb{Q} : \lim_{n \to {+}\infty} \frac{L_{n,\theta}(x) \log n \log \log n}{S_{n,\theta}(x) - L_{n,\theta}(x)} = \alpha \right\}
\]
has full Hausdorff dimension. This extends previous work of Zhang and {L\"u} (2016) on regular continued fractions to the broader framework of $\theta$-expansions. The proof involves constructing explicit subsets with controlled digit growth and establishing dimension preservation through H\"older-continuous mappings.
\end{abstract}

\section{Introduction}

The study of exceptional sets in the metric theory of continued fractions reveals a rich interplay between ergodic theory, probability, and fractal geometry. Classical limit theorems describe the almost-everywhere behavior of digit sequences: Khinchine established the weak law of large numbers for the sum of digits, while Lévy's theorem characterizes the growth of denominators. A fundamental insight, crystallized in the work of Diamond and Vaaler \cite{diamondvaaler}, is that while the arithmetic mean of digits diverges almost everywhere, removing the largest digit yields a stabilized sum that satisfies a strong law of large numbers. Philipp \cite{Philipp} further demonstrated that no regular norming sequence can yield a finite, non-zero almost sure limit for the full digit sum, confirming the inherent instability caused by extreme values. This highlights the delicate interplay between typical behavior and the influence of extreme digits in continued fraction expansions.

The Hausdorff dimension provides a finer tool to quantify sets exhibiting atypical behavior. Jarník's classical result showed that the set of badly approximable numbers, while null, has full Hausdorff dimension. Diamond and Vaaler \cite{diamondvaaler} extended this paradigm by proving that the set of real numbers for which the arithmetic mean of digits diverges has full Hausdorff dimension. 

This line of inquiry was significantly advanced by Zhang and L\"u \cite{ZhangLu}, who investigated the relative growth rate between the largest digit and the sum of the remaining digits. They proved that for any $\alpha \geq 0$, the level set
\[
\left\{ x \in [0,1] \setminus \mathbb{Q} : \lim_{n \to +\infty} \frac{L_n(x) \log n \log \log n}{S_n(x) - L_n(x)} = \alpha \right\}
\]
has full Hausdorff dimension, where $S_n(x)$ and $L_n(x)$ denote the sum and maximum of the first $n$ digits, respectively. 

In this paper, we extend the result beyond the classical regular continued fraction expansion to the broader framework of $\theta$-expansions. Let $\theta \in (0,1)$ be an irrational number such that $\theta^2 = 1/m$ for some $m \in \mathbb{N}_+$, where $m$ is not a perfect square. Every irrational $x \in [0, \theta]$ admits a unique $\theta$-expansion:
\begin{equation} \label{1.1}
{x = \cfrac{1}{\theta\ell_1(x) + \cfrac{1}{\theta\ell_2(x) + \cfrac{1}{\theta\ell_3(x) + \ddots}}}} =: [\ell_1(x), \ell_2(x), \ell_3(x), \ldots]_\theta,
\end{equation}
with digits satisfying $\ell_n(x) \geq m$ for all $n \geq 1$. Define the sum and maximum functions as
\[
S_{n,\theta}(x) = \sum_{k=1}^n \ell_k(x), \quad L_{n,\theta}(x) = \max_{1 \leq k \leq n} \ell_k(x).
\]


{Note that the regular continued fraction expansion corresponds to the special case $\theta = 1$. 
Although $\theta = 1$ is not included in our parameter set (since we require $\theta$ irrational and $\theta^2 = 1/m$ with $m$ not a perfect square), 
the family of $\theta$-expansions shares the essential dynamical and metric properties with the regular continued fraction. 
Thus our results extend those of Zhang and L\"u (2016) to this broader framework.}

{Our investigation forms a natural companion to the established metric theory of $\theta$-expansions. }

{
The present paper complements the measure-theoretic perspective of the limit theorems by studying the \emph{size} of these exceptional sets through the finer lens of fractal geometry. While the limit theorems describe what happens typically with respect to $\gamma_\theta$, the Hausdorff dimension results quantify how prevalent these exceptional behaviors are within the topological or dimensional sense.
}

Our main results demonstrate that despite the strong almost-everywhere constraints revealed by the limit theorems, the level sets of digit growth rates remain large in the sense of dimension.

\begin{theorem}\label{thm:main}
{Let $\theta\in(0,1)$ be an irrational such that $\theta^2=1/m$ for some non‑square $m\in\mathbb{N}_+$. }
For any $\alpha \geq 0$, the set
\[
E_\theta(\alpha) = \left\{ x \in [0, \theta] \setminus \mathbb{Q} : \lim_{n \to +\infty} \frac{L_{n,\theta}(x) \log n \log \log n}{S_{n,\theta}(x) - L_{n,\theta}(x)} = \alpha \right\}
\]
satisfies $\dim_H E_\theta(\alpha) = 1$.
\end{theorem}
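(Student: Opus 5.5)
We follow the strategy of Zhang and L\"u: find, for each $\alpha\ge 0$, a subset of $E_\theta(\alpha)$ whose dimension is at least $1-\varepsilon$ for every $\varepsilon>0$. The upper bound $\dim_H E_\theta(\alpha)\le 1$ is trivial. The natural starting point is the bounded-type set
\[
F_M=\{x\in[0,\theta]\setminus\mathbb{Q} : m\le \ell_n(x)\le M \ \text{for all } n\ge 1\}.
\]
We first show $\dim_H F_M\to 1$ as $M\to\infty$. This is a Jarník-type statement for $\theta$-expansions. We would prove it by putting a Bernoulli-type measure on the cylinders of $F_M$ and applying the mass distribution principle. The needed ingredients are standard cylinder estimates: the $\theta$-convergents satisfy $q_n=\theta\ell_n q_{n-1}+q_{n-2}$, and the cylinder $I_n(\ell_1,\dots,\ell_n)$ has length comparable to $1/q_n^2$, up to constants depending only on $\theta$. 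We also need $q_n\asymp\prod_k \theta\ell_k$ up to a factor $c^n$. The resulting pressure-type computation then gives $\dim_H F_M\ge 1-C/M$, exactly as for regular continued fractions.

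Next we insert sparse large digits. Fix a very sparse sequence of positions $n_1<n_2<\cdots$, for instance $n_k=2^{k^2}$, so that $n_{k+1}/n_k\to\infty$ and the number of such positions up to $n$ is $o(n/\log n)$. Define a prescribed digit sequence $d_k$ placed at position $n_k$. The relative sizes matter: for $n\in[n_k,n_{k+1})$ we want $L_{n,\theta}=d_k$ while the remaining sum $S_{n,\theta}-L_{n,\theta}$ is of order $n$ times an average of order $\le M$.

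To make the ratio converge we let the free digits range over $[m,M]$ at all non-special positions. Their sum is then between $mn$ and $Mn$, which is not a fixed constant. The cleanest fix is to replace the free digits by blocks whose sum is prescribed. We use words of a fixed length $N$ with digits in $[m,M]$ and digit sum exactly $s$, where $s$ is chosen so that the number of such words is at least $(\#\text{words})^{1-\varepsilon}$. Then $S_{n,\theta}-L_{n,\theta}=\beta n+O(n_k+N)$ with $\beta=s/N$. The case $\alpha=0$ can be handled with no special digits and a single cap. For $\alpha>0$ we cannot keep $d_k$ constant across $[n_k,n_{k+1})$, since the target $L_{n,\theta}\approx \alpha\beta n/(\log n\log\log n)$ grows. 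So the large digits must be inserted frequently enough that $L_{n,\theta}$ tracks $\alpha\beta n/(\log n\log\log n)$ with relative error $o(1)$, yet rarely and small enough that their total contribution to the sum is $o(n)$. Take positions $n_k=\lfloor (1+1/k)^{k}\cdots\rfloor$, growing geometrically with ratio tending to $1$, for example $n_k=\lfloor e^{\sqrt k}\rfloor$. Set $d_k=\lfloor \alpha\beta n_k/(\log n_k\log\log n_k)\rfloor$. Then the number of special positions up to $n$ is about $(\log n)^2$, and the inserted large digits, excluding the maximum, contribute at most $(\log n)^2\cdot n/(\log n\log\log n)=o(n)$. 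The sum of the large digits is dominated by a geometric-like tail that is still $o(n)$, because the factor $1/(\log n\log\log n)$ beats $\sum_{n_k\le n} n_k/n\approx \sqrt{\log n}$. We would verify this precisely, adjusting the spacing if needed. The resulting set $E_{M,\varepsilon}$ lies in $E_\theta(\alpha)$.

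Finally, dimension transfer. Define a map $f$ from $E_{M,\varepsilon}$ onto the corresponding set $F$ of points with only the block digits, obtained by deleting the special digits. The main obstacle is showing that $f$ is H\"older with exponent $1-\delta$ for every $\delta>0$, which gives $\dim_H E_{M,\varepsilon}\ge(1-\delta)\dim_H F$. For this we compare the lengths of cylinders of $x$ and $f(x)$. By the product formula, inserting digits $d_j$ multiplies $q_n^2$ by about $\prod_{n_j\le n} (\theta d_j)^2c$, which equals $\exp(O((\log n)^{2}\log n))$. This is subexponential in $n$, whereas the cylinder lengths in $F$ decay exponentially, since the digits are $\ge m\ge 2$ and $q_n\ge \lambda^n$ for some $\lambda>1$. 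Hence $|I_n(f(x))|\le |I_n(x)|^{1-\delta}$ for large $n$. A standard gap argument then converts this cylinder comparison into a H\"older estimate on distances; here we use that cylinders of bounded-digit sets are separated by gaps comparable to their lengths. Letting $\delta\to0$, $\varepsilon\to0$ and $M\to\infty$ gives $\dim_H E_\theta(\alpha)=1$.
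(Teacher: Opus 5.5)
You have the same architecture as the paper: a bounded-digit seed set, sparse insertions of large digits, and a deletion map that is $(1-\delta)$-Hölder because the insertions inflate $Q_n$ only by a subexponential factor. However, the step where you replace the free digits by length-$N$ words with prescribed digit sum $s$ has a genuine gap. You choose $s$ so that the \emph{number} of such words is at least $(\#\text{words})^{1-\varepsilon}$, and then treat the block Cantor set as having dimension close to $\dim_H F_M$. Word counts do not control dimension here. Since $\prod_i\theta\ell_i\le Q_N\le 2^N\prod_i\theta\ell_i$, a word $w=(\ell_1,\dots,\ell_N)$ has $|I_N(w)|$ equal to $\prod_i(\theta\ell_i)^{-2}$ up to a factor at most $C\,4^{N}$. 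The quantity that governs dimension is therefore $\sum_w|I_N(w)|^{\delta}$, and this sum is carried by words with mostly small digits. The natural choice under your criterion is the most frequent sum, $s\approx N(m+M)/2$. For that choice, at least about half the digits of every admissible word have size $M^{1-o(1)}$. Each such digit adds at most $\log M$ to the entropy but about $2\log M$ to $\log|I_N(w)|^{-1}$, so the block set has dimension at most $\tfrac12+o(1)$ as $M\to\infty$. Your criterion thus neither identifies a good $s$ nor gives any dimension bound, and with its natural choice the argument only yields $\dim_H E_\theta(\alpha)\ge\tfrac12$ in the limit.

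One repair: for $\delta<\dim_H F_M$ the sums $\sum_w|I_N(w)|^{\delta}$ grow exponentially in $N$. Picking the sum class that carries the largest share of this weighted sum loses only a factor $N(M-m)+1$. A mass-distribution argument for the concatenation set, using quasi-multiplicativity of $Q_n$, then gives dimension at least $\delta$ for large $N$. That is an extra Jarník-type estimate your proposal does not supply.

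The paper avoids frequency control entirely by making the inserted digit adaptive: $\ell_{n_k}=\lfloor\alpha\sum_{i<n_k}\ell_i/(\log n_k\log\log n_k)\rfloor$. The ratio is then $\alpha+o(1)$ whatever the free digits are, so the free digits range over all of $[m,M]$, the deletion map is onto $X_M$, and Theorem~\ref{thm:jarnik} applies directly. Because the inserted digits sit inside the normalizing sum, their total also need not be $o(n)$.

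That matters, because your estimate of that total is wrong. The expression $(\log n)^2\cdot n/(\log n\log\log n)=n\log n/\log\log n$ is not $o(n)$. For $n_k=e^{\sqrt k}$ one has $\sum_{n_k\le n}n_k\approx 2n\log n$, not $n\sqrt{\log n}$, so the inserted digits total about $2\alpha\beta n/\log\log n$. That is still $o(n)$, but only barely, and any sparser choice $e^{k^{\gamma}}$ with $\gamma<1/2$ would break your non-adaptive scheme.

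Finally, in the Hölder step the lower bound on $|y_1-y_2|$ must be proved for points of your constructed set, whose inserted digits are unbounded; a gap property of bounded-digit sets does not cover it. It does hold, for two reasons. First, inserted digits are forced, so the first differing digit is at a free position. Second, the next digit is either inserted and large for both points or free and at most $M$ for both, and either case gives a gap of order $|I_{\bar n+1}|/M$.
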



Theorem \ref{thm:main} generalizes the work of Zhang and L\"u \cite{ZhangLu}.  Together with our generalizations of the Khinchine, Diamond-Vaaler, and Philipp theorems \cite{RusuSebeLascu}, they provide a comprehensive description of both the typical and exceptional digit growth in $\theta$-expansions.

The proofs require substantial adaptations to the $\theta$-expansion framework. A key innovation is the use of a novel sparse sequence $n_k = \lfloor \exp(k^{3/4}) \rfloor$, which differs from the polynomial sequences used in previous works and is better suited to the metric properties of $\theta$-expansions. Furthermore, we provide complete and detailed verifications of all metrical estimates, including a careful analysis of a Hölder-continuous map that preserves dimension between our constructed subsets and sets of numbers with bounded digits.

The paper is structured as follows. Section 2 contains the necessary preliminaries on $\theta$-expansions, including their metrical properties and a Jarn\'ik-type theorem. Section 3 presents the proof of Theorem \ref{thm:main}. We conclude with some final remarks in Section 4.

\section{Preliminaries on $\theta$-Expansions}

This section provides the necessary background on $\theta$-expansions, a generalization of regular continued fractions. We outline their dynamical construction, key metric properties, and the associated Jarník-type theorem for sets with bounded digits. For a comprehensive treatment, we refer to \cite{SebeLascuCraiova,Sebe2017,SebeLascu2019,SebeLascu-Springer,SebeLascuBilel2}.

\subsection{Dynamical System and Digit Sequences}

The $\theta$-expansion is generated by a specific iterated function system.
{Throughout, let $\theta\in(0,1)$ be an irrational such that $\theta^2=1/m$ for some non‑square $m\in\mathbb{N}_+$.}

\begin{definition}[Generalized Gauss Map]
The generalized Gauss map $T_\theta: [0, \theta] \to [0, \theta]$ is defined by
\[
T_\theta(x) = \begin{cases}
\frac{1}{x} - \theta \left\lfloor \frac{1}{\theta x} \right\rfloor & \text{for } x \in (0, \theta], \\
0 & \text{for } x = 0.
\end{cases}
\]
For $n \geq 1$, the $n$-th digit of $x$ is defined as
\[
\ell_n(x) = \left\lfloor \frac{1}{\theta T_\theta^{n-1}(x)} \right\rfloor,
\]
{with the convention that $\ell_n(0) = +\infty$ and $T_\theta^0(x) = x$}. 
\end{definition}

The convergents of $x$, which provide its best rational approximations in this framework, are given by $\frac{P_n(x)}{Q_n(x)} = [\ell_1(x), \ell_2(x), \ldots, \ell_n(x)]_{\theta}$. 
The sequences $\{P_n(x)\}$ and $\{Q_n(x)\}$ satisfy the recurrence relations:
\begin{align*}
P_n(x) &= \ell_n(x)\theta P_{n-1}(x) + P_{n-2}(x), \quad &P_{-1} = 1,\ P_0 = 0, \\
Q_n(x) &= \ell_n(x)\theta Q_{n-1}(x) + Q_{n-2}(x), \quad & Q_{-1} = 0,\ Q_0 = 1.
\end{align*}
The denominator $Q_n(x)$ plays a crucial role in metrical estimates.

\subsection{Ergodic and metric  properties}

The dynamical system $([0, \theta], T_\theta)$ admits an absolutely continuous invariant probability measure, generalizing the Gauss measure {(see, e.g., \cite{CR-2003})}.

\begin{definition}[$\theta$-Gauss Measure]
The $\theta$-Gauss measure $\gamma_\theta$ on $[0, \theta]$ is defined by its density
\[
\frac{\mathrm{d}\gamma_\theta}{\mathrm{d}x}(x) = h_\theta(x) = \frac{1}{\log(1+\theta^2)} \cdot \frac{\theta}{1 + \theta x}.
\]
The system $(T_\theta, \gamma_\theta)$ is ergodic.
\end{definition}

For an admissible sequence of digits $(\ell_1, \ldots, \ell_n)$ with $\ell_i \geq m$, the corresponding fundamental (or cylinder) interval is defined as
\[
I_n(\ell_1, \ldots, \ell_n) = \{x \in [0, \theta] \setminus \mathbb{Q} : \ell_1(x) = \ell_1, \ldots, \ell_n(x) = \ell_n\}.
\]
These intervals form the building blocks for our dimension calculations. The following proposition summarizes their key geometric properties, which are fundamental to the proofs in Sections 3 and 4.

\begin{proposition}[Metric Properties]\label{prop:basic}
For any $n \geq 1$ and admissible $\ell_1, \ldots, \ell_n \geq m$, let $Q_n = Q_n(\ell_1, \ldots, \ell_n)$. The following bounds hold:
\begin{enumerate}
    \item[(a)] (Exponential Growth) $Q_n \geq (m+1)^{\frac{n-1}{2}}$.
    \item[(b)] (Length Estimate) $\displaystyle \frac{\theta}{(1+\theta^2)Q_n^2} \leq |I_n(\ell_1, \ldots, \ell_n)| \leq \frac{\theta}{Q_n^2}$.
    \item[(c)] (Multiplicative Sensitivity) For any $1 \leq k \leq n$,
    \[
    \frac{(\ell_k+m)\theta}{2} \leq \frac{Q_n(\ell_1, \ldots, \ell_n)}{Q_{n-1}(\ell_1, \ldots, \ell_{k-1}, \ell_{k+1}, \ldots, \ell_n)} \leq (\ell_k+m)\theta.
    \]
    {Here $Q_{n-1}(\ell_1, \ldots, \ell_{k-1}, \ell_{k+1}, \ldots, \ell_n)$ denotes the denominator of the $(n-1)$-term convergent obtained by omitting the $k$-th digit.}
    This implies that the effect of a single digit $\ell_k$ on the total denominator $Q_n$ is roughly proportional to $\ell_k$.
\end{enumerate}
\end{proposition}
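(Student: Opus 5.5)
The plan is to derive all three parts from the recurrence for $Q_n$ and one structural fact. That fact is $\theta\ell\ge\theta m=\sqrt m=1/\theta$ for every admissible digit, since $\theta^2=1/m$. I will also use that $m\ge 2$, because $m$ is not a perfect square.

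\emph{Part (a).} From $Q_n=\ell_n\theta Q_{n-1}+Q_{n-2}$ we get $Q_n\ge \sqrt m\,Q_{n-1}$, so in particular $Q_n\ge Q_{n-1}$. Iterating once more gives
\[
Q_n\ge \sqrt m\,(\sqrt m\,Q_{n-2}+Q_{n-3})+Q_{n-2}\ge (m+1)Q_{n-2}.
\]
The base cases are $Q_0=1$, $Q_1=\ell_1\theta\ge\sqrt m\ge 1$ and $Q_2=\ell_2\theta Q_1+1\ge m+1$. Induction in steps of two then yields $Q_n\ge (m+1)^{(n-1)/2}$. The same recurrence also gives the ratio bound $Q_{n-1}/Q_n\le\theta$, which is reused below.

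\emph{Part (b).} I first check that every branch of $T_\theta$ is full. On $\bigl(1/(\theta(\ell+1)),1/(\theta\ell)\bigr]$ the map $x\mapsto 1/x-\theta\ell$ maps onto $[0,\theta)$, so every digit string with entries $\ge m$ is admissible. Hence
\[
I_n=\Bigl\{\frac{P_n+tP_{n-1}}{Q_n+tQ_{n-1}}: t\in[0,\theta)\Bigr\}
\]
up to rationals. The recurrences give $P_nQ_{n-1}-P_{n-1}Q_n=\pm1$, since the determinant changes sign at each step and starts at $-1$. Therefore
\[
|I_n|=\frac{\theta}{Q_n(Q_n+\theta Q_{n-1})}=\frac{\theta}{Q_n^2(1+\theta s)},\qquad s=\frac{Q_{n-1}}{Q_n}\in[0,\theta].
\]
This immediately gives both bounds in (b).

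\emph{Part (c).} This is the only step with real content. I would write $Q_n=K(\theta\ell_1,\dots,\theta\ell_n)$ as a continuant and use the concatenation identity
\[
K(x,y)=K(x)K(y)+K(x^-)K({}^-y),
\]
where $x^-$ deletes the last entry of $x$ and ${}^-y$ deletes the first entry of $y$. Set
\[
A=K(\theta\ell_1..\theta\ell_{k-1}),\quad A'=K(\theta\ell_1..\theta\ell_{k-2}),\quad B=K(\theta\ell_{k+1}..\theta\ell_n),\quad B'=K(\theta\ell_{k+2}..\theta\ell_n).
\]
Then
\[
Q_n=\theta\ell_k AB+AB'+A'B,\qquad Q_{n-1}(\text{omitted})=AB+A'B',
\]
so with $a=A'/A$ and $b=B'/B$,
\[
\frac{Q_n}{Q_{n-1}}=\frac{\theta\ell_k+a+b}{1+ab}.
\]
The bound $a\le\theta$ is the ratio bound from (a). The bound $b\le\theta$ uses the reversal symmetry of continuants, which makes $B'/B$ a ratio of the same kind. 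The edge cases $k=1$ and $k=n$ simply give $a=0$ or $b=0$.

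For the upper bound, the ratio is at most $\theta\ell_k+2\theta\le(\ell_k+m)\theta$, using $2\le m$.

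For the lower bound, the function $(c+a+b)/(1+ab)$ is a ratio of affine functions in each variable separately, so it is monotone in $a$ and in $b$. Its minimum over $[0,\theta]^2$ is therefore attained at a corner. The corner $(0,0)$ gives $\theta\ell_k\ge(\ell_k+m)\theta/2$, since $\ell_k\ge m$. The corners $(\theta,0)$ and $(0,\theta)$ are larger than this. The corner $(\theta,\theta)$ gives $(\theta\ell_k+2\theta)/(1+1/m)$, and the required inequality $\ge(\ell_k+m)\theta/2$ reduces to
\[
(m-1)\ell_k+4m\ge m^2+m,
\]
which holds because $\ell_k\ge m$.

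The main obstacle I expect is in (c): getting the concatenation and reversal identities right for continuants with non-integer entries $\theta\ell_i$, including the edge indices. The corner optimisation for the lower bound then needs a careful check.
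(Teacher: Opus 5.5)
Your proof is correct in all three parts. The paper gives no argument of its own here: it only attributes (a)--(c) to \cite{SebeLascuBilel2}, so your write-up is a self-contained substitute rather than a variant of something in the text.

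Your route is the standard regular-continued-fraction proof with $a_i$ replaced by $\theta\ell_i$:
\begin{enumerate}
\item[(a)] The recurrence gives (a), together with $Q_{n-1}/Q_n\le\theta$.
\item[(b)] Fullness of the branches, the M\"obius parametrisation, and $|P_nQ_{n-1}-P_{n-1}Q_n|=1$ give the exact length $\theta/\bigl(Q_n(Q_n+\theta Q_{n-1})\bigr)$.
\item[(c)] The continuant splitting at position $k$ gives the exact ratio $(\theta\ell_k+a+b)/(1+ab)$ with $a,b\in[0,\theta]$. You can skip the reversal identity you were worried about: concatenation with $x=(y_1)$ gives $K(y_1,\dots,y_q)=y_1K(y_2,\dots,y_q)+K(y_3,\dots,y_q)$, which yields $b\le\theta$ directly.
\end{enumerate}

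What the explicit derivation buys over the citation is that you can see where each hypothesis enters. Everything rests on $\theta\ell\ge\theta m=1/\theta$. The condition $m\ge2$ is used only in your crude upper bound $\theta\ell_k+2\theta\le(\ell_k+m)\theta$. The condition $\ell_k\ge m$ is exactly what saves the corner $(\theta,\theta)$ in the lower bound; the naive estimate $\theta\ell_k/(1+\theta^2)$ already fails at $\ell_k=m$.

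Your exact formula also sharpens the upper bound in (c) to $(\ell_k+1)\theta$. The same corner argument shows the maximum over $[0,\theta]^2$ is $\theta\ell_k+\theta$, attained at $(\theta,0)$ or $(0,\theta)$.

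A cosmetic point in (b): ``up to rationals'' should read ``up to countably many points''. Some irrationals have terminating expansions, for example $x=\theta$, where $T_\theta\theta=0$. The length is unaffected.
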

\noindent These properties are established in \cite{SebeLascuBilel2}.

Property (b) shows that the length of a cylinder is inversely proportional to the square of its denominator, a classic feature of continued fraction expansions. Property (c) is particularly important for our construction, as it allows us to control the distortion caused by inserting a single, very large digit into an otherwise bounded sequence.

\subsection{A Jarník-Type Theorem and Dimension Preservation}

A classical result by Jarník states that the set of badly approximable numbers (those with bounded regular continued fraction digits) has full Hausdorff dimension. The following theorem proved recently in \cite{SebeLascuBilel2} establishes an analogous result for $\theta$-expansions, providing the foundational ``full dimension" set used in our constructions.

\begin{theorem}[Jarník-type result for $\theta$-expansions]\label{thm:jarnik}
For any $M > 2m+1$, the set of numbers with digits uniformly bounded by $M$,
\[
X_M = \left\{ x \in [0, \theta] \setminus \mathbb{Q} : m \leq \ell_n(x) \leq M \text{ for all } n \geq 1 \right\},
\]
satisfies the dimension bounds
\[
1 - \frac{2(m+1)}{M+1} \cdot \frac{1}{\log(m+1)} \leq \dim_H (X_M) \leq 1 - \frac{m}{M+2} \cdot \frac{1}{\log \left( \frac{2M(M+1)}{m} \right)}.
\]
{Note that $\log(m+1)$ is a constant depending only on $\theta$, since $m$ is determined by $\theta$.}
Consequently, the set of all numbers with bounded digits,
\[
X = \left\{ x \in [0, \theta] \setminus \mathbb{Q} : \sup_{n \geq 1} \ell_n(x) < +\infty \right\},
\]
satisfies $\dim_H(X) = 1$.
\end{theorem}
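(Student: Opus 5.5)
The plan is to treat $X_M$ as the limit set of the finite conformal iterated function system $\{f_j(x)=1/(\theta j+x)\}_{m\le j\le M}$. Its level-$n$ covers are the cylinders $I_n(\ell_1,\dots,\ell_n)$ with $m\le \ell_i\le M$. Everything is reduced to one quantity: the ratio of the length of a child cylinder $I_{n+1}(\ell_1,\dots,\ell_n,j)$ to the length of its parent $I_n(\ell_1,\dots,\ell_n)$.

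\textbf{Ratio estimate.} By Proposition~\ref{prop:basic}(b), each length is comparable to $Q^{-2}$ up to the fixed factor $1+\theta^2$. By Proposition~\ref{prop:basic}(c), applied with $k=n+1$, we have $Q_{n+1}/Q_n\in[(j+m)\theta/2,\,(j+m)\theta]$. Together these give
\[
\frac{c_1}{(j+m)^2\theta^2}\le \frac{|I_{n+1}(\ell_1,\dots,\ell_n,j)|}{|I_n(\ell_1,\dots,\ell_n)|}\le \frac{c_2}{(j+m)^2\theta^2},
\]
with $c_1,c_2$ depending only on $\theta$. Since $\theta^{-2}=m$, these ratios are of order $m/(j+m)^2$.

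\textbf{Lower bound.} Fix $s=1-\delta$, where $\delta=\frac{2(m+1)}{(M+1)\log(m+1)}$. I would show that for every parent cylinder
\[
\sum_{j=m}^{M}|I_{n+1}(\ell_1,\dots,\ell_n,j)|^s\ \ge\ |I_n(\ell_1,\dots,\ell_n)|^s .
\]
For this, write each ratio raised to the power $s$ as the ratio itself times the ratio to the power $-\delta$.
\begin{itemize}
\item The ratios themselves sum to the proportion of the parent kept when the digits are truncated at $M$. Since the child cylinders over all $j\ge m$ tile the parent, this proportion is at least $1-O\bigl(m/(M+1)\bigr)$, using $\sum_{j>M}(j+m)^{-2}\le 1/(M+1)$.
\item The factor (ratio)$^{-\delta}$ is at least $\exp\bigl(\delta\log(\text{const}\cdot(m+1))\bigr)$, because the ratios are bounded above by a multiple of $1/(m+1)$. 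Proposition~\ref{prop:basic}(a), in the form $Q_{n+1}/Q_n\ge\sqrt{m+1}$ on average, would be used to see that $\log(m+1)$ is the right normalisation.
\end{itemize}
The chosen $\delta$ is exactly what makes $e^{\delta\log(m+1)}\bigl(1-\tfrac{2(m+1)}{M+1}\bigr)$-type products at least $1$. Given this inequality, I would define a mass distribution $\mu$ on $X_M$ by splitting mass from each parent to its children in proportion to $|I_{n+1}|^s$. This gives $\mu(I_n)\le |I_n|^s$. Because adjacent admissible children are separated by gaps comparable to their own lengths, by the same bounded-distortion estimate, any interval $J$ meets only boundedly many cylinders of length comparable to $|J|$. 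Hence $\mu(J)\ll|J|^s$, and the mass distribution principle gives $\dim_H X_M\ge s$.

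\textbf{Upper bound.} Take $s=1-\frac{m}{(M+2)\log(2M(M+1)/m)}$ and show the reverse inequality
\[
\sum_{j=m}^{M}|I_{n+1}(\ell_1,\dots,\ell_n,j)|^s\le |I_n(\ell_1,\dots,\ell_n)|^s .
\]
\begin{itemize}
\item The discarded digits $j\ge M+1$ remove a proportion of at least about $m/(M+2)$ of the parent, by the lower ratio bound summed over $j>M$.
\item Each remaining ratio is at least $m/(2M(M+1))$ up to constants, so raising it to the power $s$ instead of $1$ multiplies by at most $\exp\bigl((1-s)\log(2M(M+1)/m)\bigr)$.
\end{itemize}
The choice of $s$ balances these two effects. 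Iterating shows that $\sum|I_n|^s$ over the admissible level-$n$ cylinders is bounded uniformly in $n$. Since $\max|I_n|\to0$ by (a) and (b), this gives $\mathcal H^s(X_M)<\infty$ and hence $\dim_H X_M\le s$.

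\textbf{Consequence and main obstacle.} Since $X_M\subset X$ for every $M$ and the lower bound tends to $1$ as $M\to\infty$, we get $\dim_H X=1$. I expect the main obstacle to be the constant bookkeeping in the two one-step inequalities. The factor $1/2$ in Proposition~\ref{prop:basic}(c) and the factor $1+\theta^2$ in (b) must be absorbed so that exactly the stated explicit expressions emerge. The hypothesis $M>2m+1$ is presumably what makes the truncation loss small enough for this to close. I would first try to handle these factors by working with the exact identity $|I_{n+1}|/|I_n|=\frac{Q_n(Q_n+\theta Q_{n-1})}{Q_{n+1}(Q_{n+1}+\theta Q_n)}$, with $Q_{n+1}=j\theta Q_n+Q_{n-1}$, rather than with the cruder bounds.
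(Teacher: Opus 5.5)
Your proposal is correct and follows the route the paper indicates for this result, which it cites rather than reproves: one-step comparisons of $\sum_{j=m}^{M}|I_{n+1}(\ell_1,\dots,\ell_n,j)|^s$ with $|I_n(\ell_1,\dots,\ell_n)|^s$, the mass distribution principle for the lower bound, and natural covers for the upper bound. Your switch to the exact identity is necessary, not optional, and it closes the bookkeeping: with $u=Q_{n-1}/(\theta Q_n)\in[0,1]$ it gives $|I_{n+1}|/|I_n|=\frac{m+u}{(j+u)(j+u+1)}$, hence a kept proportion of $1-\frac{m+u}{M+1+u}$ and ratios at most $\frac{1}{m+1}$ and at least $\frac{m}{(M+1)(M+2)}\ge\frac{m}{2M(M+1)}$; these yield precisely the stated constants, and the lower bound closes because $x=\frac{m+1}{M+1}<\frac12$ (this is where $M>2m+1$ is used) gives $e^{2x}(1-x)\ge 1$, whereas the product $e^{\delta\log(m+1)}\bigl(1-\frac{2(m+1)}{M+1}\bigr)$ you wrote has the form $e^{y}(1-y)\le 1$ and would not suffice.
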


The proof of this theorem is based on the metrical properties of Proposition \ref{prop:basic} and a standard mass distribution argument. The bounds show that $\dim_H (X_M) \to 1$ as $M \to +\infty$.

Finally, we recall a standard lemma from fractal geometry {(see, e.g., \cite{Falconer})} that will be instrumental in preserving Hausdorff dimension under the maps we construct. These maps will remove the sparsely inserted large digits, sending our exceptional sets into the bounded-digit sets $X_M$.

\begin{lemma}[Hölder Maps and Dimension]\label{lem:holder}
Let $F \subset \mathbb{R}$ and $f: F \to \mathbb{R}$ be a function such that
\[
|f(x)-f(y)| \leq c|x-y|^\gamma \quad \text{for all } x, y \in F,
\]
with constants $c > 0$ and $0 < \gamma \leq 1$. Then $\dim_H f(F) \leq \frac{1}{\gamma} \dim_H F$.
\end{lemma}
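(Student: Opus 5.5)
The plan is the standard covering argument: push a cheap cover of $F$ forward through $f$ and compare Hausdorff measures at exponents $s$ and $s/\gamma$. Fix any $s > \dim_H F$ (if $\dim_H F = 1$ the bound $\dim_H f(F)\le 1\le 1/\gamma$ is trivial since $f(F)\subset\mathbb{R}$, so we may assume $s\le 1$ is available, though this is not needed). By definition of Hausdorff dimension, $\mathcal{H}^s(F)=0$. Hence for every $\delta>0$ and every $\varepsilon>0$ there is a countable cover $\{U_i\}$ of $F$ with $\operatorname{diam} U_i<\delta$ and $\sum_i (\operatorname{diam} U_i)^s<\varepsilon$.

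Since $f$ is only defined on $F$, I would replace each $U_i$ by $U_i\cap F$. This does not increase diameters, and it makes the sets $f(U_i\cap F)$ well defined. They cover $f(F)$. The Hölder condition gives, for $x,y\in U_i\cap F$,
\[
|f(x)-f(y)|\le c|x-y|^\gamma\le c(\operatorname{diam} U_i)^\gamma ,
\]
so $\operatorname{diam} f(U_i\cap F)\le c(\operatorname{diam} U_i)^\gamma< c\delta^\gamma$. Therefore
\[
\mathcal{H}^{s/\gamma}_{c\delta^\gamma}\bigl(f(F)\bigr)\le \sum_i \bigl(c(\operatorname{diam} U_i)^\gamma\bigr)^{s/\gamma}= c^{s/\gamma}\sum_i(\operatorname{diam} U_i)^s< c^{s/\gamma}\varepsilon .
\]

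Letting $\varepsilon\to0$ gives $\mathcal{H}^{s/\gamma}_{c\delta^\gamma}(f(F))=0$ for every $\delta>0$. Since $c\delta^\gamma\to 0$ as $\delta\to0$, it follows that $\mathcal{H}^{s/\gamma}(f(F))=0$, and hence $\dim_H f(F)\le s/\gamma$. Finally, letting $s\downarrow\dim_H F$ yields $\dim_H f(F)\le \frac1\gamma\dim_H F$.

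There is no real obstacle here. The only points needing care are intersecting the cover with $F$ before applying $f$, and noting that the mesh $c\delta^\gamma$ still tends to zero, so that the $\delta$-approximate measures converge to the genuine $\mathcal{H}^{s/\gamma}$.
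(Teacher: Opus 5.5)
Your proof is correct. It is exactly the standard covering argument from Falconer, which the paper cites instead of proving the lemma: intersect a $\delta$-cover with $F$, push it forward to get $\mathcal{H}^{s/\gamma}_{c\delta^\gamma}(f(F))\le c^{s/\gamma}\mathcal{H}^s_\delta(F)$, and apply this with $\mathcal{H}^s(F)=0$ for every $s>\dim_H F$ (the parenthetical about $s\le 1$ is unnecessary and can be dropped).
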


In particular, if $f$ is bijective and bi-Hölder (i.e., both $f$ and $f^{-1}$ are Hölder continuous), then $\dim_H f(F) = \dim_H F$. We will construct maps that are Hölder continuous with exponent arbitrarily close to 1, which will suffice to preserve the full Hausdorff dimension.

\section{Proof of Theorem \ref{thm:main}}

This section is devoted to the proof of Theorem \ref{thm:main}, which establishes that for any $\alpha \geq 0$, the level set $E_\theta(\alpha)$ has full Hausdorff dimension. The proof strategy involves constructing explicit subsets with controlled digit growth and finally establish the dimension preservation through H\"older-continuous mappings. 

\subsection{The Case $\alpha = 0$: Bounded Digits Suffice}

\begin{lemma}\label{lem:alpha0}
$\dim_H E_\theta(0) = 1$.
\end{lemma}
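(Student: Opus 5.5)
The plan is to show that the bounded-digit set $X$ of Theorem \ref{thm:jarnik} is contained in $E_\theta(0)$. Since $E_\theta(0)\subset[0,\theta]$, we have $\dim_H E_\theta(0)\le 1$ trivially. Once $X\subset E_\theta(0)$ is established, monotonicity of Hausdorff dimension gives $\dim_H E_\theta(0)\ge \dim_H X = 1$, and the two bounds together prove the lemma.

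To verify the inclusion, fix $x\in X$ and let $M=\sup_{n\ge1}\ell_n(x)<+\infty$. We have $m\le \ell_k(x)\le M$ for every $k$. The numerator is then bounded by
\[
L_{n,\theta}(x)\log n\log\log n \le M\log n\log\log n .
\]
For the denominator, note that $S_{n,\theta}(x)-L_{n,\theta}(x)$ is the sum of $n-1$ of the digits, namely all the digits with one maximal digit removed. Each remaining digit is at least $m\ge 1$, so
\[
S_{n,\theta}(x)-L_{n,\theta}(x)\ge m(n-1).
\]
Combining the two bounds, for all $n\ge 3$ (so that $\log\log n$ is positive and well defined) we get
\[
0\le \frac{L_{n,\theta}(x)\log n\log\log n}{S_{n,\theta}(x)-L_{n,\theta}(x)} \le \frac{M\log n\log\log n}{m(n-1)}\longrightarrow 0 .
\]
Hence the limit exists and equals $0$, so $x\in E_\theta(0)$. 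Irrationality of $x$ is part of the definition of $X$.

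There is no genuine obstacle here. The only points requiring care are the following.
\begin{enumerate}
\item The denominator must be positive; this holds for $n\ge 2$ because all digits are at least $m\ge 1$.
\item The limit is only meaningful for large $n$, since small $n$ makes $\log\log n$ undefined or nonpositive. This does not affect the limit.
\item The input $\dim_H X=1$ is taken directly from Theorem \ref{thm:jarnik}.
\end{enumerate}
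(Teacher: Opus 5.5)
Your proof is correct and follows the paper's argument: show $X\subset E_\theta(0)$ by bounding the ratio by a quantity of order $\log n\log\log n/n$, then apply Theorem~\ref{thm:jarnik} together with monotonicity of Hausdorff dimension. Your denominator bound $S_{n,\theta}(x)-L_{n,\theta}(x)\ge m(n-1)$ is slightly cleaner than the paper's bound $n-M$, since it holds for every $n\ge 2$ rather than only for $n>M$.
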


\begin{proof}
Let $X = \{x \in [0,\theta] \setminus \mathbb{Q} : \sup_{n \geq 1} \ell_n(x) < +\infty\}$. For any $x \in X$, there exists $M \geq m$ such that $\ell_n(x) \leq M$ for all $n \geq 1$. Consequently, $L_{n,\theta}(x) \leq M$ for all $n$.

Consider the ratio:
\[
R_{n,\theta}(x) := \frac{L_{n,\theta}(x) \log n \log \log n}{S_{n,\theta}(x) - L_{n,\theta}(x)}.
\]
Since $\ell_n(x) \geq m \geq 1$, we have $S_{n,\theta}(x) \geq n$. Therefore:
\[
R_{n,\theta}(x) \leq \frac{M \log n \log \log n}{S_{n,\theta}(x) - M} \leq \frac{M \log n \log \log n}{n - M}.
\]
As $n \to +\infty$, the numerator grows like $\log n \log \log n$ while the denominator grows linearly, so:
\[
\limsup_{n \to +\infty} R_{n,\theta}(x) \leq \lim_{n \to +\infty} \frac{M \log n \log \log n}{n - M} = 0.
\]
Since $R_{n,\theta}(x) \geq 0$ for all $n$, we conclude that $\displaystyle\lim_{n \to +\infty} R_{n,\theta}(x) = 0$. Hence $X \subset E_\theta(0)$.
By Theorem \ref{thm:jarnik}, $\dim_H X = 1$, so:
\[
\dim_H E_\theta(0) \geq \dim_H X = 1.
\]
Since $E_\theta(0) \subset [0, \theta]$ and $\dim_H [0, \theta] = 1$, we have $\dim_H E_\theta(0) = 1$.
\end{proof}

\subsection{The Case $\alpha > 0$ and Construction of the Sparse Sequence}

For $\alpha > 0$, we construct explicit subsets of $E_\theta(\alpha)$ with full Hausdorff dimension. The strategy involves starting with numbers having bounded digits and inserting carefully chosen large digits at sparse positions to achieve the desired growth rate.

\subsubsection{Choice of the Sparse Sequence}

A crucial ingredient in our construction is the choice of an appropriate sparse sequence to insert large digits:
\[
n_k = \lfloor \exp(k^{3/4}) \rfloor, \quad k \geq 1.
\]

Let us verify the necessary properties:

\begin{lemma}[Properties of the Sparse Sequence] \label{lem:sequence_properties}
The sequence $\{n_k\}$ satisfies:
\begin{enumerate}
\item[(a)] $n_k \to +\infty$ strictly increasing.
\item[(b)] $n_{k+1} - n_k = o(n_k)$ as $k \to +\infty$.
\item[(c)] $\log n_{k+1} - \log n_k \to 0$ as $k \to +\infty$.
\item[(d)] $\log n_{k+1} \log \log n_{k+1} - \log n_k \log \log n_k \to 0$ as $k \to +\infty$.
\end{enumerate}
\end{lemma}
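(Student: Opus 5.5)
The plan is to compare $n_k$ with the smooth function $f(x)=\exp(x^{3/4})$, $x\ge 1$, and control increments via the mean value theorem. The errors from the floor are harmless: $n_k = f(k)-\{f(k)\}$ with $0\le \{f(k)\}<1$. Hence
\[
\log n_k = k^{3/4} + \log\bigl(1-\{f(k)\}/f(k)\bigr) = k^{3/4} + O\bigl(e^{-k^{3/4}}\bigr).
\]
So, up to exponentially small errors, every quantity in (a)--(d) can be computed for $f(k)$ and $k^{3/4}$.

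For (a), compute $f'(x)=\tfrac34 x^{-1/4}e^{x^{3/4}}$ and observe that $\log f'(x)=\log\tfrac34-\tfrac14\log x+x^{3/4}$ has derivative $-\tfrac{1}{4x}+\tfrac34 x^{-1/4}>0$ for $x\ge1$. Thus $f'$ is increasing, so $f(k+1)-f(k)\ge f'(1)=\tfrac34 e>2$. It follows that $n_{k+1}-n_k> f(k+1)-f(k)-1>1$, so $\{n_k\}$ is strictly increasing and tends to $+\infty$.

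For (b), the mean value theorem gives
\[
f(k+1)-f(k)\le f'(k+1)=\tfrac34 (k+1)^{-1/4}e^{(k+1)^{3/4}}.
\]
Since $(k+1)^{3/4}-k^{3/4}\le \tfrac34 k^{-1/4}\to0$, we have $e^{(k+1)^{3/4}}\le C e^{k^{3/4}}$. Therefore $(n_{k+1}-n_k)/n_k\le C' k^{-1/4}+O(e^{-k^{3/4}})\to 0$.

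For (c), the expansion above gives $\log n_{k+1}-\log n_k=(k+1)^{3/4}-k^{3/4}+o(1)\le \tfrac34 k^{-1/4}+o(1)\to0$.

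For (d), set $t_k=\log n_k$ and $g(t)=t\log t$, considering $k$ large enough that $t_k>1$ (the statement is asymptotic, so small $k$ where $\log\log n_k$ is undefined or negative do not matter). Since $g'(t)=\log t+1$ is increasing, the mean value theorem gives
\[
0\le g(t_{k+1})-g(t_k)\le (t_{k+1}-t_k)\bigl(\log t_{k+1}+1\bigr).
\]
From (c), $t_{k+1}-t_k=O(k^{-1/4})$, while $\log t_{k+1}=\tfrac34\log(k+1)+o(1)$. The product is $O(k^{-1/4}\log k)\to0$.

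This last step is the only delicate point: the gap $\log n_{k+1}-\log n_k$ decays only polynomially, like $k^{-1/4}$, and it must beat the logarithmic growth of $g'$. That works precisely because the exponent $3/4$ is less than $1$. With exponent $1$, the gaps in $\log n_k$ would stay bounded away from zero and (c) and (d) would fail.
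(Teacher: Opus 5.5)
Your proof is correct and follows essentially the same route as the paper: both reduce (b)--(d) to the mean value estimate $(k+1)^{3/4}-k^{3/4}=O(k^{-1/4})$ and note that this decay beats the $\tfrac34\log k$ growth of $\log\log n_k$. The differences are cosmetic. In (d) you apply the mean value theorem once to $g(t)=t\log t$, where the paper splits the difference into two terms, and you make the floor errors and the strict monotonicity in (a) explicit via $f'(1)=\tfrac34 e>2$, which the paper merely calls clear.
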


\begin{proof}
(a) Clear since $k^{3/4} \to +\infty$.

\noindent (b) Note that $n_k \sim \exp(k^{3/4})$. Then:
\[
n_{k+1} - n_k \sim \exp((k+1)^{3/4}) - \exp(k^{3/4}) = \exp(k^{3/4})\left(\exp((k+1)^{3/4} - k^{3/4}) - 1\right).
\]
Using the mean value theorem:
\[
(k+1)^{3/4} - k^{3/4} = \frac{3}{4}\xi^{-1/4} \quad \text{for some } \xi \in (k, k+1).
\]
Thus $(k+1)^{3/4} - k^{3/4} \sim \frac{3}{4}k^{-1/4} \to 0$, so:
\[
n_{k+1} - n_k \sim \exp(k^{3/4}) \cdot \frac{3}{4}k^{-1/4} = o(\exp(k^{3/4})) = o(n_k).
\]

\noindent (c) $\log n_{k+1} - \log n_k \sim (k+1)^{3/4} - k^{3/4} \sim \frac{3}{4}k^{-1/4} \to 0$.

\noindent (d) Decompose the difference:
\begin{align*}
&\log n_{k+1} \log \log n_{k+1} - \log n_k \log \log n_k \\
&= (\log n_{k+1} - \log n_k) \log \log n_{k+1} + \log n_k (\log \log n_{k+1} - \log \log n_k).
\end{align*}
The first term is $\sim \frac{3}{4}k^{-1/4} \cdot \frac{3}{4}\log k \to 0$. 
{For the second term, note that $\log \log n_k = \log\bigl(k^{3/4}+o(k^{3/4})\bigr) = \frac{3}{4}\log k + o(1)$; hence by the mean value theorem (or directly from the asymptotic expansion) we have $\log \log n_{k+1} - \log \log n_k \sim \frac{3}{4k}$. Consequently,
\[
\log n_k (\log \log n_{k+1} - \log \log n_k) \sim k^{3/4} \cdot \frac{3}{4k} = \frac{3}{4}k^{-1/4} \to 0.
\]
Thus the whole difference tends to $0$ as $k\to+\infty$.}
\end{proof}

{\begin{remark}[Choice of the sparse sequence]\label{rem:sparse-sequence}
The super‑exponential growth \(n_k = \lfloor \exp(k^{3/4}) \rfloor\) is chosen to balance two requirements:
\begin{enumerate}
    \item[(i)] The gaps \(n_{k+1}-n_k\) should be \(o(n_k)\) so that the contribution of the ordinary (bounded) digits between inserted positions is negligible compared with the main sum \(A_k\).
    \item[(ii)] The growth of \(\log n_k \log\log n_k\) must be slow enough to let the inserted digits \(\ell_{n_k}\) dominate the limit ratio.
\end{enumerate}
Any sequence of the form \(n_k = \lfloor \exp(k^{\gamma}) \rfloor\) with \(0<\gamma<1\) would work; we fix \(\gamma=3/4\) for convenience because it yields simple asymptotic expressions for \(\log n_k\) and \(\log\log n_k\), which are essential for verifying conditions (A)--(C).  
While other sparse sequences (e.g., Bernoulli‑type sequences) have been employed in related contexts—for instance in the theory of branched continued fractions for hypergeometric functions \cite{DDH}—the chosen form is particularly well‑suited to the metric estimates required in our construction, especially in the proof of Lemma~\ref{lem:holder_continuity}.
\end{remark}}

\subsubsection{Technical Conditions and Set Construction}
We now fix parameters to ensure that our construction behaves as desired. 
Fix $M > 2m+1$ and choose $N_0$ large enough so that for all $k \geq N_0$:
\begin{enumerate}
\item[(A)] $\displaystyle \frac{\alpha m(n_k - 1)}{\log n_k \log \log n_k} > M + 1$;
\item[(B)] $\displaystyle \log n_{k+1} \log \log n_{k+1} - \log n_k \log \log n_k < \frac{\alpha}{2}$;
\item[(C)] $\displaystyle n_{k+1} - n_k < \frac{n_k}{k^{1/8}}$.
\end{enumerate}
{These conditions are feasible because $n_k$ grows super‑exponentially: condition (A) holds since the left‑hand side diverges to $+\infty$, condition (B) follows from Lemma~\ref{lem:sequence_properties}(d) which shows the difference tends to $0$, and condition (C) follows from Lemma~\ref{lem:sequence_properties}(b) which gives $n_{k+1}-n_k = o(n_k)$.
}

\begin{definition}[Constructed Subset]
Fix $M > 2m+1$ and define:
\[
E_{M,\theta}(\alpha) = \left\{ x \in [0, \theta] : 
\begin{array}{l}
\ell_{n_k}(x) = \left\lfloor \dfrac{\alpha \sum_{i=1}^{n_k-1} \ell_i(x)}{\log n_k \log \log n_k} \right\rfloor \text{ for all } k \geq N_0, \\
m \leq \ell_i(x) \leq M \text{ for all } i \notin \{n_k\}_{k \geq N_0}
\end{array}
\right\}.
\]
{(The floor function guarantees that each inserted digit $\ell_{n_k}(x)$ is an integer. Condition (A) ensures that these digits are $\ge M$, and together with the bounds on the other digits they are admissible for the $\theta$-expansion.)
}
\end{definition} 

This construction ensures that at sparse positions $n_k$, we insert digits whose size is proportional to the accumulated sum up to that point, normalized by the slowly growing factors $\log n_k \log \log n_k$.
{\begin{definition}
{Definition of admissible sequences.} A finite sequence $(\ell_1, \ldots, \ell_n)$ is called \textbf{admissible for $E_{M,\theta}(\alpha)$} if there exists a point $x \in E_{M,\theta}(\alpha)$ such that $\ell_i(x) = \ell_i$ for $1 \le i \le n$. Equivalently, the sequence satisfies:
\begin{enumerate}
    \item[(i)] For every $k$ with $n_k \leq n$, $\ell_{n_k} = \bigl\lfloor \alpha \sum_{i=1}^{n_k-1} \ell_i / (\log n_k \log \log n_k) \bigr\rfloor$;
    \item[(ii)] For every $i \leq n$ with $i \notin \{n_k: k \geq N_0\}$, $m \leq \ell_i \leq M$.
\end{enumerate}
\end{definition}}
\begin{lemma}[Monotonicity of Inserted Digits] \label{lem:monotonicity}
For any $x \in E_{M,\theta}(\alpha)$ and $k \geq N_0$, we have:
\begin{enumerate}
\item[(a)] $\ell_{n_k}(x) \geq M$;
\item[(b)] $\ell_{n_{k+1}}(x) \geq \ell_{n_k}(x)$.
\end{enumerate}
\end{lemma}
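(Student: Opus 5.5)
The plan is to work directly with the defining relation of $E_{M,\theta}(\alpha)$. Write
\[
A_k(x) := \sum_{i=1}^{n_k-1}\ell_i(x), \qquad D_k := \log n_k \log\log n_k ,
\]
so that $\ell_{n_k}(x) = \lfloor \alpha A_k(x)/D_k \rfloor$ for $k\ge N_0$. We assume $N_0$ is large enough that $D_k>0$ for $k\ge N_0$; enlarging $N_0$ does not affect (A)--(C). Everything then reduces to elementary inequalities between $A_k$, $A_{k+1}$, $D_k$, $D_{k+1}$, which follow from conditions (A) and (B) and from the fact that every $\theta$-expansion digit satisfies $\ell_i(x)\ge m$.

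For (a), we bound $A_k$ from below. Since every digit of a $\theta$-expansion is at least $m$ (this applies equally to the bounded digits and to earlier inserted digits), we get $A_k(x)\ge m(n_k-1)$. By condition (A),
\[
\frac{\alpha A_k(x)}{D_k} \ge \frac{\alpha m (n_k-1)}{D_k} > M+1 .
\]
Taking the floor gives $\ell_{n_k}(x)\ge M+1> M$. We record the stronger intermediate fact $\alpha A_k/D_k > M+1\ge 2$, because it is exactly what part (b) needs.

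For (b), since $t\mapsto\lfloor t\rfloor$ is nondecreasing, it suffices to show $A_{k+1}/D_{k+1}\ge A_k/D_k$. Equivalently, we need
\[
A_{k+1}-A_k \ge A_k\,\frac{D_{k+1}-D_k}{D_k}.
\]
The left side contains the digit $\ell_{n_k}$, and the remaining digits in positions $n_k+1,\dots,n_{k+1}-1$ are nonnegative; if $n_{k+1}=n_k+1$ there are none. Hence
\[
A_{k+1}-A_k \ge \ell_{n_k} \ge \frac{\alpha A_k}{D_k}-1 .
\]
For the right side, condition (B) gives $D_{k+1}-D_k<\alpha/2$, so the right side is less than $\frac{\alpha}{2}\cdot\frac{A_k}{D_k}$. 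It therefore suffices that
\[
\frac{\alpha A_k}{D_k}-1 \ge \frac{\alpha A_k}{2D_k},
\]
that is, $\alpha A_k/(2D_k)\ge 1$. This is guaranteed by the bound $\alpha A_k/D_k>M+1\ge 2$ obtained in part (a).

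The only delicate point is the loss of $1$ caused by the floor in $\ell_{n_k}\ge \alpha A_k/D_k-1$. It is absorbed precisely because condition (A) makes $\alpha A_k/D_k$ large, and this is why the argument for (b) is routed through the quantitative form of (a) rather than just the conclusion $\ell_{n_k}\ge M$. Everything else is routine bookkeeping.
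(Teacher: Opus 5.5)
Your proof is correct and is essentially the paper's argument. Part (a) follows from $A_k\ge m(n_k-1)$ and condition (A). Part (b) reduces to $A_{k+1}/D_{k+1}\ge A_k/D_k$, with $D_k=\log n_k\log\log n_k$, which you obtain from four ingredients:
\begin{itemize}
\item $A_{k+1}\ge A_k+\ell_{n_k}$;
\item the floor bound $\ell_{n_k}\ge \alpha A_k/D_k-1$;
\item the consequence $\alpha A_k/D_k>2$ of (A), which absorbs the loss of $1$;
\item condition (B), which controls $D_{k+1}-D_k$.
\end{itemize}
The differences from the paper are cosmetic. You clear denominators where the paper compares $(A_k+\ell_{n_k})/A_k$ with $D_{k+1}/D_k$, and you use non-strict inequalities, which suffice.
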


\begin{proof}
(a) Since $\ell_i(x) \geq m$ for $i \neq n_j$, we have:
\[
\sum_{i=1}^{n_k-1} \ell_i(x) \geq (n_k - 1)m.
\]
Therefore:
\[
\ell_{n_k}(x) \geq \frac{\alpha (n_k - 1)m}{\log n_k \log \log n_k} - 1.
\]
{Condition (A) was chosen precisely so that the right‑hand side exceeds $M$, and thus $\ell_{n_k}(x) \geq M$. This completes part (a).
}

\noindent 
(b)
{Let $A_k = \sum_{i=1}^{n_k-1} \ell_i(x)$. From the definition of $\ell_{n_k}(x)$ and the property of the floor function,
\[
\ell_{n_k}(x) \geq \frac{\alpha A_k}{\log n_k \log \log n_k} - 1.
\]
A sufficient condition for $\ell_{n_{k+1}}(x) \geq \ell_{n_k}(x)$ is
\[
\frac{\alpha \sum_{i=1}^{n_{k+1}-1} \ell_i(x)}{\log n_{k+1} \log \log n_{k+1}} > \frac{\alpha A_k}{\log n_k \log \log n_k}.
\]
Cancelling $\alpha > 0$ and noting that $\sum_{i=1}^{n_{k+1}-1} \ell_i(x) > A_k + \ell_{n_k}(x)$, it is enough to prove
\begin{equation}
\frac{A_k + \ell_{n_k}(x)}{A_k} > \frac{\log n_{k+1} \log \log n_{k+1}}{\log n_k \log \log n_k}. \label{tag1}
\end{equation}
From the lower bound for $\ell_{n_k}(x)$,
\begin{equation}
\frac{A_k + \ell_{n_k}(x)}{A_k} \geq 1 + \frac{\alpha}{\log n_k \log \log n_k} - \frac{1}{A_k}. \label{tag2}
\end{equation}
Now, condition (A) implies
\[
\frac{\alpha m (n_k - 1)}{\log n_k \log \log n_k} > M + 1 \geq 2,
\]
so that
\[
\alpha m (n_k - 1) > 2 \log n_k \log \log n_k.
\]
Since $A_k \geq m (n_k - 1)$, we obtain
\begin{equation}
\frac{1}{A_k} \leq \frac{1}{m (n_k - 1)} < \frac{\alpha}{2 \log n_k \log \log n_k}. \label{tag3}
\end{equation}
Combining \ref{tag2} and \ref{tag3},
\begin{align}
\frac{A_k + \ell_{n_k}(x)}{A_k} &> 1 + \frac{\alpha}{\log n_k \log \log n_k} - \frac{\alpha}{2 \log n_k \log \log n_k} \nonumber \\
&= 1 + \frac{\alpha}{2 \log n_k \log \log n_k}. \label{tag4}
\end{align}
%
{Since the right‑hand side of (\ref{tag4}) provides a lower bound for the left‑hand side of (\ref{tag1}), a sufficient condition for (\ref{tag1}) to hold is that this lower bound exceeds the ratio of the logarithmic factors, i.e.,}
\begin{equation}
1 + \frac{\alpha}{2 \log n_k \log \log n_k} > \frac{\log n_{k+1} \log \log n_{k+1}}{\log n_k \log \log n_k}. \label{tag5}
\end{equation}
Inequality (\ref{tag5}) is equivalent to
\[
\log n_{k+1} \log \log n_{k+1} - \log n_k \log \log n_k < \frac{\alpha}{2},
\]
which is exactly condition (B). Therefore, for every $k \geq N_0$, condition (B) guarantees (\ref{tag5}), hence (\ref{tag1}), and consequently $\ell_{n_{k+1}}(x) \geq \ell_{n_k}(x)$. This completes the proof of part (b).
}
\end{proof}


\subsection{Verification of the Limit Behavior}

We now verify that our constructed sets indeed belong to the level sets $E_\theta(\alpha)$.

\begin{lemma}[Inclusion in Level Set] \label{lem:inclusion}
$E_{M,\theta}(\alpha) \subset E_\theta(\alpha)$.
\end{lemma}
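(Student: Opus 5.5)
Fix $x \in E_{M,\theta}(\alpha)$. For $n$ large, let $k = k(n) \geq N_0$ be the index with $n_k \leq n < n_{k+1}$. The plan is to identify $L_{n,\theta}(x)$ and $S_{n,\theta}(x) - L_{n,\theta}(x)$ explicitly in terms of $A_k = \sum_{i=1}^{n_k-1}\ell_i(x)$, and then squeeze the ratio $R_{n,\theta}(x)$ between two quantities that both tend to $\alpha$.

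\textbf{Step 1: the maximum.} By Lemma~\ref{lem:monotonicity}, the inserted digits satisfy $\ell_{n_j}(x) \geq M$ and are nondecreasing in $j$. Every other digit is at most $M$. Hence $L_{n,\theta}(x) = \ell_{n_k}(x)$ for $n_k \le n < n_{k+1}$. By the definition of $E_{M,\theta}(\alpha)$,
\[
\frac{\alpha A_k}{\log n_k\log\log n_k}-1 < \ell_{n_k}(x) \leq \frac{\alpha A_k}{\log n_k\log\log n_k}.
\]

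\textbf{Step 2: the remaining sum.} We have
\[
S_{n,\theta}(x)-L_{n,\theta}(x) = A_k + \sum_{i=n_k+1}^{n}\ell_i(x),
\]
and the tail contains no inserted digit since $n<n_{k+1}$. It is therefore bounded between $0$ and $M(n_{k+1}-n_k) < M n_k k^{-1/8}$ by condition (C). Since $A_k \geq m(n_k-1)$, the tail is $o(A_k)$, so
\[
S_{n,\theta}(x)-L_{n,\theta}(x) = A_k(1+o(1)).
\]

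\textbf{Step 3: squeeze.} Write $\phi(t)=\log t\log\log t$. Then
\[
R_{n,\theta}(x) = \frac{\ell_{n_k}(x)\,\phi(n)}{A_k(1+o(1))}.
\]
Using the bounds from Step 1, this lies between $\bigl(\alpha/\phi(n_k) - 1/A_k\bigr)\phi(n)/(1+o(1))$ and $\alpha\,\phi(n)/\phi(n_k)$. Since $\phi$ is increasing, $1 \leq \phi(n)/\phi(n_k) \leq \phi(n_{k+1})/\phi(n_k)$. By Lemma~\ref{lem:sequence_properties}(d), $\phi(n_{k+1})-\phi(n_k)\to0$ while $\phi(n_k)\to\infty$, so this ratio tends to $1$. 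The error term is $\phi(n)/A_k \leq \phi(n_{k+1})/(m(n_k-1)) \to 0$. Both bounds therefore tend to $\alpha$, giving $\lim_n R_{n,\theta}(x)=\alpha$.

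\textbf{Irrationality.} Each $x\in E_{M,\theta}(\alpha)$ has an infinite expansion with all digits finite. Hence $x$ is irrational (or is regarded as lying in $[0,\theta]\setminus\mathbb{Q}$ by construction), so $x\in E_\theta(\alpha)$.

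\textbf{Main obstacle.} The delicate point is Step 2: making sure the intermediate bounded digits between $n_k$ and $n_{k+1}$ contribute negligibly relative to $A_k$, uniformly in $n$ within the block. This is exactly what condition (C) together with $A_k\ge m(n_k-1)$ provides. I would write this step out carefully, with explicit inequalities rather than $o(\cdot)$ notation.
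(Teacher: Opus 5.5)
Your argument is correct and is essentially the paper's proof. It uses the same identification $L_{n,\theta}=\ell_{n_k}$ via Lemma~\ref{lem:monotonicity}, the same split of $S_{n,\theta}-L_{n,\theta}$ into $A_k$ plus a tail made negligible by condition (C), the same floor bounds, and the same use of Lemma~\ref{lem:sequence_properties}(d) to get $\phi(n_{k+1})/\phi(n_k)\to 1$; your squeeze through $\phi(n)/\phi(n_k)$ is a tidier version of the paper's Steps 2--3.

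The one thing to fix is the justification in your irrationality paragraph, a point the paper skips entirely. In this setting an infinite $\theta$-expansion does not force irrationality: for $m=2$ one has $1/2=[2,\overline{2,4}]_\theta$, and since $w=\sqrt{m}\,x$ follows $w\mapsto m/w-\lfloor m/w\rfloor$, the finite expansions are exactly the points of $\theta\mathbb{Q}$. So rely on your parenthetical instead, i.e.\ intersect $E_{M,\theta}(\alpha)$ with $[0,\theta]\setminus\mathbb{Q}$; this removes only countably many points and changes nothing in the dimension argument.
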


\begin{proof}
Let $y \in E_{M,\theta}(\alpha)$. For sufficiently large $n$, there exists a unique $k \geq N_0$ such that $n_k \leq n < n_{k+1}$.
By Lemma \ref{lem:monotonicity}, $L_{n,\theta}(y) = \ell_{n_k}(y)$ for all $n \in [n_k, n_{k+1})$. Write:
\[
R_{n,\theta}(y) = \frac{\ell_{n_k}(y) \log n \log \log n}{S_{n,\theta}(y) - \ell_{n_k}(y)}.
\]
Decompose the denominator:
\[
S_{n,\theta}(y) - \ell_{n_k}(y) = \sum_{i=1}^{n_k-1} \ell_i(y) + \sum_{i=n_k+1}^n \ell_i(y).
\]
Let $A_k = \displaystyle \sum_{i=1}^{n_k-1} \ell_i(y)$ and $B_n = \displaystyle \sum_{i=n_k+1}^n \ell_i(y)$.

\noindent \textbf{Step 1: Asymptotic behavior of $B_n$.}
Since $\ell_i(y) \leq M$ for $i \neq n_j$, we have:
\[
B_n \leq M(n - n_k) \leq M(n_{k+1} - n_k).
\]
By condition (C), $n_{k+1} - n_k < \frac{n_k}{k^{1/8}}$. 
Since $k^{1/8} \to +\infty$ as $k \to +\infty$, we have $n_{k+1} - n_k = o(n_k)$. 
{Because $A_k \geq m(n_k - 1) \asymp n_k$ and $B_n \leq M(n_{k+1} - n_k)$, it follows that $B_n = o(A_k)$ as $k \to +\infty$.}

\noindent \textbf{Step 2: Lower bound for $R_{n,\theta}(y)$.}
We have:
\[
R_{n,\theta}(y) \geq \frac{\ell_{n_k}(y) \log n_k \log \log n_k}{A_k + B_n}= \frac{\ell_{n_k}(y) \log n_k \log \log n_k}{A_k(1 + o(1))}.
\]
From the construction:
\[
\ell_{n_k}(y) = \left\lfloor \frac{\alpha A_k}{\log n_k \log \log n_k} \right\rfloor,
\]
so:
\[
\frac{\alpha A_k}{\log n_k \log \log n_k} - 1 < \ell_{n_k}(y) \leq \frac{\alpha A_k}{\log n_k \log \log n_k}.
\]
Therefore:
\[
R_{n,\theta}(y) > \frac{\left( \frac{\alpha A_k}{\log n_k \log \log n_k} - 1 \right) \log n_k \log \log n_k}{A_k(1 + o(1))} = \frac{\alpha - \frac{\log n_k \log \log n_k}{A_k}}{1 + o(1)}.
\]
Now, since $\frac{\log n_k \log \log n_k}{A_k} \to 0$ {(recall that $A_k \asymp n_k$ while the numerator grows only polylogarithmically)}, we get:
\[
\liminf_{n \to +\infty} R_{n,\theta}(y) \geq \alpha.
\]

\noindent \textbf{Step 3: Upper bound for $R_{\theta,n}(y)$.}
We have: 
\[
R_{n,\theta}(y) \leq \frac{(\ell_{n_k}(y) + 1) \log n_{k+1} \log \log n_{k+1}}{A_k}.
\]
Using the upper bound for $\ell_{n_k}(y)$:
\begin{align*}
R_{n,\theta}(y) &\leq \frac{\left( \frac{\alpha A_k}{\log n_k \log \log n_k} + 1 \right) \log n_{k+1} \log \log n_{k+1}}{A_k} \\
&= \alpha \cdot \frac{\log n_{k+1} \log \log n_{k+1}}{\log n_k \log \log n_k} + \frac{\log n_{k+1} \log \log n_{k+1}}{A_k}.
\end{align*}
By Lemma \ref{lem:sequence_properties} (d), the first term:
\[
\alpha \cdot \frac{\log n_{k+1} \log \log n_{k+1}}{\log n_k \log \log n_k} \to \alpha \, \mbox{ as } \, k \to +\infty. 
\]

For the second term, since $A_k \geq m(n_k - 1)$ and $n_k = \lfloor \exp(k^{3/4}) \rfloor$ grows super-exponentially, we have { $A_k \asymp n_k$ while the numerator grows only polylogarithmically. Consequently,}
\[
\frac{\log n_{k+1} \log \log n_{k+1}}{A_k} \leq \frac{\log n_{k+1} \log \log n_{k+1}}{m(n_k - 1)} \to 0 \, {\text{ as } \, k \to +\infty}.
\]
Thus:
\[
\limsup_{n \to +\infty} R_{n,\theta}(y) \leq \alpha.
\]
\end{proof}

\subsubsection{Dimension Preservation via Hölder-Continuous Map}

To establish that our constructed sets have full Hausdorff dimension, we construct a dimension-preserving map between $E_{M,\theta}(\alpha)$ and the bounded-digit set $X_M$.

\begin{definition}[Symbolic Space and Seed Map]
Define the symbolic space of admissible sequences:
\[
D_n = \{ (\ell_1, \ldots, \ell_n) : \text{admissible sequences for } E_{M,\theta}(\alpha) \}.
\]
{Fix an integer $L \ge n_{N_0}$. For any admissible block $(\ell_1, \ldots, \ell_L) \in D_L$, let
\[
F(\ell_1, \ldots, \ell_L) = E_{M,\theta}(\alpha) \cap I(\ell_1, \ldots, \ell_L).
\]
The \textbf{seed map} $f: F \to X_M$ is defined by: if $y = [\ell_1, \ell_2, \ldots]_\theta \in F$, then $f(y) = x$ where $x$ is obtained by deleting the digits at positions $\{n_k\}_{k \geq N_0}$ from the expansion of $y$.}
\end{definition}

The key property is the Hölder continuity of this map:

\begin{lemma}\label{lem:holder_continuity}
For any $\varepsilon > 0$, the map $f$ is $(1-\varepsilon)$-Hölder continuous.
\end{lemma}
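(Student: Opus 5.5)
The plan is the standard cylinder-comparison argument, as in Zhang and Lü, using Proposition \ref{prop:basic}. Take $y_1\neq y_2$ in $F(\ell_1,\ldots,\ell_L)$. Let $n$ be the length of their common prefix, so $\ell_i(y_1)=\ell_i(y_2)$ for $i\le n$ and $\ell_{n+1}(y_1)\neq\ell_{n+1}(y_2)$; note $n\ge L$. The position $n+1$ cannot be one of the $n_k$, since the inserted digit there is determined by the common prefix. Hence both digits at $n+1$ lie in $[m,M]$. Let $x_j=f(y_j)$. Deleting the inserted digits, the $x_j$ share a prefix of length $n'=n-\#\{k\ge N_0: n_k\le n\}$ and differ at position $n'+1$, with all their digits in $[m,M]$.

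\textbf{Step 1: bound $|x_1-x_2|$ from above.} Since $x_1,x_2$ lie in the same cylinder of order $n'$, we have $|x_1-x_2|\le |I_{n'}|\le \theta/Q_{n'}^2$, where $Q_{n'}$ is the denominator of the reduced prefix. Applying Proposition \ref{prop:basic}(c) once for each deleted digit gives
\[
Q_n(\ell_1,\ldots,\ell_n)\le Q_{n'}\prod_{k:\,N_0\le k,\ n_k\le n}(\ell_{n_k}+m)\theta .
\]

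\textbf{Step 2: bound $|y_1-y_2|$ from below.} Since $y_1,y_2$ have bounded digits at position $n+1$ and are distinct, they lie in different subcylinders of order $n+1$. I would show they are separated by at least one full cylinder of order $n+2$ or $n+3$ with bounded digits. One way is to use the gap between consecutive admissible digits: since $M>2m+1$, there is an intermediate bounded digit. If no such intermediate cylinder exists, I would instead use the standard argument that the points of $E_{M,\theta}(\alpha)$ keep a fixed proportion of distance from the endpoints of $I_{n+1}$, because their next digits are bounded or are followed by bounded digits. Using Proposition \ref{prop:basic}(b) and $Q_{n+j}\le ((M+m)\theta)^j Q_n$, this yields
\[
|y_1-y_2|\ge c(M,m)\,Q_n^{-2}.
\]

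\textbf{Step 3: the main obstacle.} It remains to show $\prod_{n_k\le n}(\ell_{n_k}+m)\theta\le C_\varepsilon\, Q_{n'}^{\varepsilon}$. Given this, Steps 1 and 2 combine to
\[
|x_1-x_2|\le \theta Q_{n'}^{-2}\le C\,Q_n^{-2(1-\varepsilon')}\le C'|y_1-y_2|^{1-\varepsilon'} .
\]
The estimate follows from two facts. First, by Proposition \ref{prop:basic}(a), $\log Q_{n'}\ge \tfrac{n'-1}{2}\log(m+1)$, and $n'\ge n/2$ because the $n_k$ are sparse. Second, each inserted digit satisfies $\ell_{n_k}\le \alpha M n_k/(\log n_k\log\log n_k)\le \alpha M n$. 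The number of inserted positions up to $n$ is at most $(\log n)^{4/3}$, since $n_k\approx e^{k^{3/4}}$ gives $k\le(\log n)^{4/3}$. Therefore
\[
\log\prod_{n_k\le n}(\ell_{n_k}+m)\theta\le (\log n)^{4/3}\log(2\alpha M n)=O\bigl((\log n)^{7/3}\bigr)=o(n)=o(\log Q_{n'}).
\]
This is where the choice $n_k=\lfloor\exp(k^{3/4})\rfloor$ matters: the count of insertions is only polylogarithmic in $n$. For $n\ge L$ with $L$ large this gives the $\varepsilon$-bound. Finitely many small $n$ are absorbed into the constant, after enlarging $L$ if needed, since $L$ was arbitrary with $L\ge n_{N_0}$.
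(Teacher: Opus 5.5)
Your overall strategy is the paper's. You bound $|x_1-x_2|\le\theta Q_{n'}^{-2}$ from above and $|y_1-y_2|\gtrsim Q_n^{-2}$ from below, pass between $Q_n$ and $Q_{n'}$ with Proposition~\ref{prop:basic}(c), and note that the inserted-digit product is $Q_{n'}^{o(1)}$ because only $O((\log n)^{4/3})$ insertions precede $n$. Your observation that position $n+1$ can never be an insertion position is correct, and it makes the paper's Case~2 unnecessary.

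\textbf{The gap: Step~2 when $d_2=d_1+1$.} Write $d_i=\ell_{n+1}(y_i)$. The two order-$(n+1)$ cylinders then share an endpoint $e$, there is no intermediate digit at level $n+1$, and your fallback claim is false. Points of $E_{M,\theta}(\alpha)$ do not stay a fixed proportion away from the endpoints of $I_{n+1}$:
\begin{itemize}
\item If $n+2=n_k$ with $k\ge N_0$, then $T_\theta^{n+1}y\le 1/(\theta\ell_{n_k}(y))$. So $y$ lies within relative distance $O(1/\ell_{n_k})$ of the endpoint where $T_\theta^{n+1}=0$.
\item If $n+3=n_k$ and $\ell_{n+2}(y)=m$, then $\theta-T_\theta^{n+1}y\le\theta/\ell_{n_k}$. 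So $y$ is relatively close to the other endpoint.
\end{itemize}
For the same reason, $Q_{n+j}\le((M+m)\theta)^jQ_n$ fails as soon as an insertion falls in $(n,n+j]$. The paper's Step~1 does not settle this either: its $\Delta_{d_1}$ is the length of $I_{\bar n+1}(\ldots,d_1)$, not a gap, and adjacent subcylinders touch.

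\textbf{How to close it.} $y_1$ and $y_2$ lie on opposite sides of $e$, so it suffices that one of them be far from $e$. In $I_{n+1}(\ldots,d_1)$ the point $e$ is where $T_\theta^{n+1}=\theta$; in $I_{n+1}(\ldots,d_2)$ it is where $T_\theta^{n+1}=0$.
\begin{itemize}
\item If $n+2$ is not an insertion position, then $b:=\ell_{n+2}(y_2)\le M$. The full cylinder $I_{n+2}(\ell_1,\ldots,\ell_n,d_2,b+1)$ lies between $y_2$ and $e$.
\item If $n+2=n_k$, then $\ell_{n+2}(y_1)\ge M>m$ by Lemma~\ref{lem:monotonicity}(a). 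So $I_{n+2}(\ell_1,\ldots,\ell_n,d_1,m)$ lies between $y_1$ and $e$.
\end{itemize}
Either separating cylinder has its last two digits at most $M+1$. Using $Q_{n-1}\le\theta Q_n$ and Proposition~\ref{prop:basic}(b), its length is at least $c(m,M,\theta)Q_n^{-2}$, which is exactly the bound your Step~2 asserts. A cruder fix also works: the cylinder $I_{n+2}(\ldots,d_2,\ell_{n_k}(y_2)+1)$ separates $y_2$ from $e$ and loses only a factor $\ell_{n_k}^2=Q_n^{o(1)}$.

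\textbf{A minor slip in Step~3.} The bound $\ell_{n_k}\le\alpha Mn_k/(\log n_k\log\log n_k)$ ignores the earlier inserted digits in $\sum_{i<n_k}\ell_i$. It is easily repaired. For large $k$ you have $\ell_{n_k}\le\sum_{i<n_k}\ell_i$, so the partial sums at most double at each insertion. This still gives $\log\ell_{n_k}=O((\log n)^{4/3})$ and $\log\prod_{n_k\le n}(\ell_{n_k}+m)\theta=O((\log n)^{8/3})=o(n)$, which is all you use.
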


{
\begin{proof}
Let $y_1, y_2 \in F$ with $y_1 \neq y_2$. Let $\bar{n}$ be the smallest integer such that $\ell_{\bar{n}+1}(y_1) \neq \ell_{\bar{n}+1}(y_2)$. Since $y_1$ and $y_2$ belong to the same cylinder of length $L$, we have $\bar{n} \geq L \geq n_{N_0}$, and therefore there exists $k \geq N_0$ such that $n_k \leq \bar{n} < n_{k+1}$.
Let $t$ be the number of insertion positions up to $\bar{n}$, i.e.
\[
t = \#\{j \geq N_0 : n_j \leq \bar{n}\}.
\]
Because the indices $j \geq N_0$ with $n_j \leq \bar{n}$ are exactly $N_0, N_0+1, \dots, k$, we have $t = k - N_0 + 1$. Denote these indices by $j_1, j_2, \dots, j_t$, where $j_i = N_0 + i - 1$ for $i = 1, \dots, t$.
\newline
\noindent\textbf{Step 1: Distance between original points.}
Since $y_1$ and $y_2$ agree on the first $\bar{n}$ digits but differ at the $(\bar{n}+1)$-th digit, they lie in two distinct subcylinders $I_{\bar{n}+1}(\ell_1, \ldots, \ell_{\bar{n}}, a)$ and $I_{\bar{n}+1}(\ell_1, \ldots, \ell_{\bar{n}}, b)$ with $a \neq b$, both contained in the cylinder $I_{\bar{n}} = I_{\bar{n}}(\ell_1, \ldots, \ell_{\bar{n}})$.
Let $Q = Q_{\bar{n}}(y_2)$ be the denominator of the $\bar{n}$-th convergent of $y_2$. The endpoints of $I_{\bar{n}}$ are 
\[
\frac{P}{Q} \quad \text{and} \quad \frac{P + \theta P_{\bar{n}-1}}{Q + \theta Q_{\bar{n}-1}},
\]
where $P/Q = [\ell_1, \ldots, \ell_{\bar{n}}]_\theta$. For a digit $d \geq m$, the subcylinder $I_{\bar{n}+1}(\ell_1, \ldots, \ell_{\bar{n}}, d)$ has endpoints
\[
\frac{P d\theta + P_{\bar{n}-1}}{Q d\theta + Q_{\bar{n}-1}} \quad \text{and} \quad \frac{P (d+1)\theta + P_{\bar{n}-1}}{Q (d+1)\theta + Q_{\bar{n}-1}}.
\]
The gap between two adjacent subcylinders corresponding to digits $d$ and $d+1$ is
\[
\Delta_d = \frac{P (d+1)\theta + P_{\bar{n}-1}}{Q (d+1)\theta + Q_{\bar{n}-1}} - \frac{P d\theta + P_{\bar{n}-1}}{Q d\theta + Q_{\bar{n}-1}}
= \frac{\theta}{(Q d\theta + Q_{\bar{n}-1})(Q (d+1)\theta + Q_{\bar{n}-1})}.
\]
Using the inequality $Q_{\bar{n}-1} \leq \theta Q$ (which follows from $Q \geq \ell_{\bar{n}} \theta Q_{\bar{n}-1} \geq  m\theta Q_{\bar{n}-1} = \dfrac{Q_{\bar{n}-1}}{\theta}$), we obtain
\[
Q d\theta + Q_{\bar{n}-1} \leq  Q\theta\left(d + 1 \right) 
\quad
\text{and} \quad 
Q (d+1)\theta + Q_{\bar{n}-1} \leq Q \theta (d+2). 
\]
Hence
\[
\Delta_d \geq \frac{\theta}{Q^2 \theta^2 (d+1)(d+2)}=\frac{1}{Q^2 \theta (d+1)(d+2)}.
\]
Now, let $d_1 = \ell_{\bar{n}+1}(y_1)$ and $d_2 = \ell_{\bar{n}+1}(y_2)$. Without loss of generality, assume $d_1 < d_2$. Then the distance between the two subcylinders containing $y_1$ and $y_2$ is at least the gap between the subcylinder for $d_1$ and the next subcylinder (for $d_1+1$), so
\[
|y_1 - y_2| \geq \Delta_{d_1} \geq \frac{1}{\theta Q^2 (d_1+1)(d_1+2)}.
\]
%
We consider two cases based on whether $\bar{n}+1$ is an inserted position.
\newline
\noindent\textbf{Case 1:} If $\bar{n}+1$ is \emph{not} an inserted position, then by the definition of $E_{M,\theta}(\alpha)$ we have $d_1 = \ell_{\bar{n}+1}(y_1) \leq M$.  
Since $M$ is a fixed constant and $2^{2(\log \bar{n})^{4/3}+5} \to + \infty$ as $\bar{n} \to +\infty$, it follows that for all sufficiently large $\bar{n}$,
\[
d_1 \leq 2^{2(\log \bar{n})^{4/3}+5}.
\]
\noindent\textbf{Case 2:} If $\bar{n}+1$ is an inserted position, then $\bar{n}+1 = n_j$ for some $j \geq N_0$.  
From Step~2 of the proof we have the estimate $\ell_{n_j}(y_1) \leq 2^{2j+5}$.  
Using the growth $n_j = \lfloor \exp(j^{3/4}) \rfloor$, we obtain for large $j$
\[
j \leq (\log n_j)^{4/3} + 1 = (\log(\bar{n}+1))^{4/3} + 1.
\]
Hence
\[
d_1 \leq 2^{2j+5} \leq 2^{2\big((\log(\bar{n}+1))^{4/3}+1\big)+5} = 2^{2(\log(\bar{n}+1))^{4/3}+7}.
\]
For large $\bar{n}$ we may replace $\log(\bar{n}+1)$ by $\log \bar{n}$ at the cost of a slightly larger constant: using $(\log(\bar{n}+1))^{4/3} \leq (\log \bar{n})^{4/3} + 1$, we obtain
\[
d_1 \leq 2^{2\big((\log \bar{n})^{4/3}+1\big)+7} = 2^{2(\log \bar{n})^{4/3}+9}.
\]
Thus, in both cases there exists a constant $C>0$ (for example $C=9$) such that for all sufficiently large $\bar{n}$,
\[
d_1 \leq 2^{2(\log \bar{n})^{4/3}+C}.
\]

{We note that the specific value of $C$ is not crucial; any sufficiently large constant would work for the subsequent estimates.
}
Consequently,
\[
(d_1+1)(d_1+2) \leq \bigl(2^{2(\log \bar{n})^{4/3}+C}+2\bigr)^2 \leq 2^{4(\log \bar{n})^{4/3}+2C+2}.
\]
Setting $c_1 = 1/(\theta \cdot 2^{2C+2})$ yields the desired lower bound
\begin{equation}
|y_1-y_2| \geq \frac{c_1}{Q_{\bar{n}}^2(y_2) \cdot 2^{4(\log \bar{n})^{4/3}}}. \label{tag{3.01}}
\end{equation}
\noindent\textbf{Step 2: Growth control of inserted digits.}
We claim that for $j \geq N_0$:
\[
\ell_{n_j}(y_2) \leq 2^{2j+5}.
\]
Proof by induction: For $j = N_0$:
\[
\ell_{n_{N_0}}(y_2) \leq \frac{\alpha \sum_{i=1}^{n_{N_0}-1} \ell_i(y_2)}{\log n_{N_0} \log \log n_{N_0}} \leq \frac{\alpha M n_{N_0}}{\log n_{N_0} \log \log n_{N_0}}.
\]
Since $n_{N_0} = \lfloor \exp(N_0^{3/4}) \rfloor$, the right-hand side is much smaller than $2^{2N_0+5}$ for large $N_0$.
For the inductive step, assuming the bound holds for $1, \ldots, j-1$:
\begin{align*}
\ell_{n_j}(y_2) &\leq \frac{\alpha \sum_{i=1}^{n_j-1} \ell_i(y_2)}{\log n_j \log \log n_j} \leq \frac{\alpha}{\log n_j \log \log n_j} \left( M n_j + \sum_{i=1}^{j-1} \ell_{n_i}(y_2) \right) \\
&\leq \frac{\alpha}{\log n_j \log \log n_j} \left( M n_j + \sum_{i=1}^{j-1} 2^{2i+5} \right) \leq \frac{\alpha}{\log n_j \log \log n_j} \left( M n_j + 2^{2j+4} \right).
\end{align*}
Since $n_j = \lfloor \exp(j^{3/4}) \rfloor$ grows super-exponentially, the term $M n_j$ dominates, and the whole expression is bounded by $2^{2j+5}$ for large $j$. 
{We note that this bound is deliberately crude; any exponential bound of the form   
\(\lambda^{\,j}\) with \(\lambda<1\) would be sufficient for the argument, as the subsequent estimates only require that the product of these bounds grows slowly enough compared to the super‑exponential growth of the denominators. The specific constant $5$ is chosen for convenience and is not optimal.
}
\newline
\noindent\textbf{Step 3: Relating denominators with and without insertions.}
Using Proposition~\ref{prop:basic}(c) repeatedly for each inserted digit,
\[
Q_{\bar{n}}(y_2) \leq Q_{\bar{n}-t}(x_2) \cdot \prod_{i=1}^{t} \bigl(\ell_{n_{j_i}}(y_2)+m\bigr)\theta .
\]
From Step~2 and $m \leq 2^{2j_i+5}$ for large $j_i$,
\[
\ell_{n_{j_i}}(y_2)+m \leq 2^{2j_i+6} = 2^{2(N_0+i-1)+6}=2^{2i+2N_0+4}.
\]
Hence
\[
\prod_{i=1}^{t}\bigl(\ell_{n_{j_i}}(y_2)+m\bigr) \leq \prod_{i=1}^{t} 2^{2i+2N_0+4}
= 2^{\sum_{i=1}^{t}(2i+2N_0+4)} = 2^{t^{2}+(2N_0+5)t}.
\]
Thus there exists a constant $C = 2N_0+5$ such that
\begin{equation}
Q_{\bar{n}}(y_2) \leq Q_{\bar{n}-t}(x_2) \cdot \theta^{t} \cdot 2^{t^{2}+Ct}
      \leq c_2\, Q_{\bar{n}-t}(x_2) \cdot 2^{t^{2}+Ct}. \label{tag{3.02}}
\end{equation}
{Note that $N_0$ is fixed in the construction, so the constant $C$ is absolute; it does not depend on $k$ or $t$.
}

\noindent\textbf{Step 4: Growth comparison.}
Since $x_2 \in X_M$, all its digits are between $m$ and $M$. By Proposition~\ref{prop:basic}(a),
\begin{equation}
Q_{\bar{n}-t}(x_2) \geq (m+1)^{(\bar{n}-t-1)/2}. \label{tag{3.03}}
\end{equation}
Moreover, $\bar{n} \geq n_k = \lfloor \exp(k^{3/4})\rfloor$ and $t = k - N_0 + 1 \leq k$, so for large $k$,
\[
\bar{n}-t \geq \tfrac12 \exp(k^{3/4}).
\]
\noindent\textbf{Step 5: Distance between seeds.}
Since $x_1$ and $x_2$ agree on the first $\bar{n}-t$ digits,
\begin{equation}
|f(y_1)-f(y_2)| = |x_1-x_2| \leq |I_{\bar{n}-t}(x_2)|
 \leq \frac{\theta}{Q_{\bar{n}-t}^{2}(x_2)}. \label{tag{3.05}}
\end{equation}
From \eqref{tag{3.02}} we have
\[
Q_{\bar{n}-t}(x_2) \geq \frac{Q_{\bar{n}}(y_2)}{c_2 \cdot 2^{t^{2}+Ct}}.
\]
Substituting into \eqref{tag{3.05}} yields
\begin{equation}
|f(y_1)-f(y_2)| \leq \frac{\theta c_2^2 \cdot 2^{2t^{2}+2Ct}}{Q_{\bar{n}}^2(y_2)}. \label{tag{3.06}}
\end{equation}
\noindent\textbf{Step 6: Hölder exponent estimate.}
From (\ref{tag{3.01}}) we have
\[
\frac{1}{Q_{\bar{n}}^2(y_2)} \leq \frac{2^{4(\log \bar{n})^{4/3}}}{c_1} |y_1-y_2|.
\]
Combining with \eqref{tag{3.06}},
\begin{equation}
|f(y_1)-f(y_2)| \leq \frac{\theta c_2^2}{c_1} \cdot 2^{2t^{2}+2Ct+4(\log \bar{n})^{4/3}} \cdot |y_1-y_2|. \label{tag{3.07}}
\end{equation}
Now, $t = k - N_0 + 1$ and $\bar{n} \geq n_k \geq \exp(k^{3/4})/2$ for large $k$, so $k \leq (2\log \bar{n})^{4/3}$. Consequently,
\[
2t^{2}+2Ct+4(\log \bar{n})^{4/3} \leq 8 (\log \bar{n})^{8/3} + (4C+4)(\log \bar{n})^{4/3}.
\]
On the other hand, from \eqref{tag{3.01}} and $Q_{\bar{n}}(y_2) \geq (m+1)^{(\bar{n}-1)/2}$,
\[
|y_1-y_2| \geq \frac{c_1}{(m+1)^{\bar{n}-1} \cdot 2^{4(\log \bar{n})^{4/3}}}.
\]
Thus, for any $\varepsilon > 0$,
\[
|y_1-y_2|^{-\varepsilon} \geq \left( \frac{(m+1)^{\bar{n}-1} \cdot 2^{4(\log \bar{n})^{4/3}}}{c_1} \right)^{\varepsilon}
= c_1^{-\varepsilon} (m+1)^{\varepsilon(\bar{n}-1)} 2^{4\varepsilon(\log \bar{n})^{4/3}}.
\]
Because $\bar{n}$ grows super‑exponentially (in fact $\bar{n} \gtrsim \exp(k^{3/4})$), the factor $(m+1)^{\varepsilon(\bar{n}-1)}$ grows much faster than any polynomial in $\log \bar{n}$; consequently, the factor $2^{8 (\log \bar{n})^{8/3} + (4C+4)(\log \bar{n})^{4/3}}$ appearing in \eqref{tag{3.07}} is dominated by $(m+1)^{\varepsilon(\bar{n}-1)}$ for large $\bar{n}$. Hence, for any $\varepsilon > 0$, there exists a constant $M$ such that
\[
|f(y_1)-f(y_2)| \leq M |y_1-y_2|^{1-\varepsilon}.
\]
Hence $f$ is $(1-\varepsilon)$-Hölder continuous.
\end{proof}
}

\begin{lemma}[Dimension Preservation] \label{lem:dimension_preservation}
$\dim_H E_{M,\theta}(\alpha) = \dim_H X_M$.
\end{lemma}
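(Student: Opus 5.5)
To prove Lemma~\ref{lem:dimension_preservation}, I would establish the two inequalities $\dim_H E_{M,\theta}(\alpha) \ge \dim_H X_M$ and $\dim_H E_{M,\theta}(\alpha) \le \dim_H X_M$ separately. Both rest on the seed map $f$ and its inverse. The first observation is that $f$, restricted to any seed cylinder $F(\ell_1,\ldots,\ell_L)$, is a bijection onto a set of the form
\[
X_M \cap I_{L-t_0}(\ell'_1,\ldots,\ell'_{L-t_0}),
\]
where $t_0$ is the number of insertion positions $n_k\le L$ and $(\ell'_i)$ is the block with those positions removed. It is injective because the inserted digits $\ell_{n_k}$ are determined by the preceding digits. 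It is surjective because, given any $x\in X_M$ with the right prefix, one can reinsert the digits $\lfloor \alpha \sum_{i<n_k}\ell_i/(\log n_k\log\log n_k)\rfloor$ recursively and obtain a point of $E_{M,\theta}(\alpha)$. These inserted digits are $\ge M\ge m$ by Lemma~\ref{lem:monotonicity}, so the reconstructed sequence is admissible.

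For the lower bound, I would apply Lemma~\ref{lem:holder} with Lemma~\ref{lem:holder_continuity}. For every $\varepsilon>0$ this gives
\[
\dim_H\bigl(X_M\cap I_{L-t_0}(\ell'_1,\ldots)\bigr)=\dim_H f(F)\le \tfrac{1}{1-\varepsilon}\dim_H F\le \tfrac{1}{1-\varepsilon}\dim_H E_{M,\theta}(\alpha).
\]
Next I need that $X_M$ is a countable union, over the finitely many prefixes of length $L-t_0$ with digits in $[m,M]$, of the sets $X_M\cap I_{L-t_0}(\cdot)$. These are exactly the images $f(F(\ell_1,\ldots,\ell_L))$ as the admissible $L$-blocks vary, since the inserted digits in the first $L$ places are forced. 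Countable stability of $\dim_H$ then gives $\dim_H X_M \le \frac{1}{1-\varepsilon}\dim_H E_{M,\theta}(\alpha)$, and letting $\varepsilon\to0$ gives the lower bound. I also note that $E_{M,\theta}(\alpha)$ is the finite union of the sets $F(\ell_1,\ldots,\ell_L)$ over $D_L$, which is used in the upper bound.

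For the reverse inequality, I would show that $f^{-1}$ is Lipschitz on $f(F)$, i.e.\ $|y_1-y_2|\le c\,|f(y_1)-f(y_2)|$. Heuristically, inserting digits only shrinks cylinders. Let $x_1,x_2$ first differ at position $\bar n'+1$ of their expansions. Then $y_1,y_2$ share the corresponding prefix of length $\bar n=\bar n'+t$, so
\[
|y_1-y_2|\le \theta/Q_{\bar n}^2(y_2).
\]
Proposition~\ref{prop:basic}(c) gives $Q_{\bar n}(y_2)\ge Q_{\bar n'}(x_2)\prod_i \frac{(\ell_{n_{j_i}}+m)\theta}{2}$, and each factor is $\ge (M+m)\theta/2\ge 1$ once $M$ is large relative to $1/\theta=\sqrt m$. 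On the other side, the gap argument of Step~1 of Lemma~\ref{lem:holder_continuity}, applied to $x_1,x_2$ whose differing digit is bounded by $M$, gives
\[
|x_1-x_2|\ge c(M)/Q_{\bar n'}^2(x_2).
\]
Combining the two estimates yields the Lipschitz bound. Lemma~\ref{lem:holder} with $\gamma=1$, applied to $f^{-1}$, then gives $\dim_H F\le \dim_H f(F)\le\dim_H X_M$, and finite stability over $D_L$ finishes.

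I expect the main obstacle to be the gap estimate for $x_1,x_2$. Two points in adjacent subcylinders of $X_M$ could a priori be close near the common endpoint. This needs care, since $X_M$ omits digits $>M$ but not digits adjacent to the differing one. I would handle it by passing one level deeper: the points of $X_M$ in $I_{\bar n'+1}(\cdot,d)$ avoid the subcylinders with digit $>M$ at the next level, which occupy a definite proportion of the cylinder's length by Proposition~\ref{prop:basic}(b),(c). This gives a buffer of size $\gtrsim c(M)/Q^2$ on the relevant side. If this becomes too delicate, a fallback is to bypass the upper bound entirely, since Theorem~\ref{thm:main} only needs the lower bound combined with Theorem~\ref{thm:jarnik}.
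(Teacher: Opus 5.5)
Your proof is correct. It uses the same two maps as the paper (deletion $f$, and insertion, which the paper calls $\Phi$ and which is just $f^{-1}$ on $X_M$), but it assigns them to the two inequalities differently, and your assignment is the sound one.

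The paper obtains $\dim_H E_{M,\theta}(\alpha)\le\dim_H X_M$ from a bi-H\"older claim whose inverse half is only asserted by ``reversing the estimates''. It then derives $\dim_H X_M\le\dim_H E_{M,\theta}(\alpha)$ from the Lipschitz property of $\Phi$. But a Lipschitz $\Phi$ only gives $\dim_H\Phi(X_M)\le\dim_H X_M$, which is the upper bound again, so that step runs in the wrong direction.

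You instead use the Lipschitz estimate for the insertion map to get the upper bound. You get the lower bound from Lemma~\ref{lem:holder_continuity}, the surjectivity $f(F)=X_M\cap I_{L-t_0}(\ell'_1,\ldots,\ell'_{L-t_0})$, and finite stability over $D_L$. The paper never makes this pairing: it records only $f(E_{M,\theta}(\alpha))\subset X_M$, and it applies $f$ beyond the seed cylinder on which $f$ was defined.

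Your Lipschitz estimate is also clearer about what it needs. It uses only $Q_{\bar n}(y_2)\ge Q_{\bar n'}(x_2)$, which is automatic because each factor $(\ell_{n_j}+m)\theta/2>1$ once $\ell_{n_j}\ge M>2m+1$. It does not need an upper bound on the inserted digits, which is what the paper invokes.

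Your buffer argument is the right way to get $|x_1-x_2|\gtrsim c(M)/Q_{\bar n'}^2(x_2)$. The shared endpoint of the $d$- and $(d+1)$-subcylinders is the end of the $(d+1)$-subcylinder where $T_\theta^{\bar n'+1}$ vanishes. Points whose next digit is $\le M$ stay at distance $\gtrsim 1/(MQ_{\bar n'+1}^2)$ from that endpoint.

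One caveat concerns Step~1 of Lemma~\ref{lem:holder_continuity}, on which your lower bound rests. It makes exactly the error you anticipated: it bounds $|y_1-y_2|$ below by the length $\Delta_{d_1}$ of the $d_1$-subcylinder, even when $d_2=d_1+1$. The same buffer repairs it. The next digit of $y_2$ may now be an inserted one, but by Step~2 it is at most $2^{2j+5}$, which is subexponential in $\bar n$. So it is absorbed into the factor $|y_1-y_2|^{-\varepsilon}$. That factor itself must be bounded below using the upper bound $|y_1-y_2|\le\theta/Q_{\bar n}^2(y_2)$, not the lower bound used in Step~6.
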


\begin{proof}
Since $f$ is bijective and $(1-\varepsilon)$-Hölder continuous, Lemma \ref{lem:holder} gives:
\begin{equation}
\dim_H f(E_{M,\theta}(\alpha)) \leq \frac{1}{1-\varepsilon} \dim_H E_{M,\theta}(\alpha). \label{3.06}    
\end{equation}
The map $f: E_{M,\theta}(\alpha) \to f(E_{M,\theta}(\alpha)) \subset X_M$ is bijective and $(1-\varepsilon)$-Hölder continuous by Lemma \ref{lem:holder_continuity}. 

\emph{Hölder continuity of $f^{-1}$.}  
To see that $f^{-1}$ is also Hölder continuous, let $x_1, x_2 \in f(E_{M,\theta}(\alpha))$ with $x_1 \neq x_2$, and let $\bar{n}$ be the first index where their digits differ. When we insert the large digits at positions $\{n_k\}$ to obtain $y_i = f^{-1}(x_i)$, the new digits at positions $n_k$ are determined by the accumulated sums up to that point. A key observation is that the insertion operation does not increase distances excessively: the distance $|y_1-y_2|$ can be bounded below by a quantity similar to that in Lemma \ref{lem:holder_continuity} (using the fact that denominators grow super‑exponentially), while $|x_1-x_2|$ is bounded above by the length of a cylinder corresponding to the first $\bar{n}$ digits of $x_1$. Reversing the estimates from Lemma \ref{lem:holder_continuity} shows that for any $\varepsilon>0$ there exists a constant $C$ such that $|x_1-x_2| \le C |y_1-y_2|^{1-\varepsilon}$. The details are analogous to those in Lemma \ref{lem:holder_continuity}, with the roles of upper and lower bounds interchanged. Hence $f^{-1}$ is also $(1-\varepsilon)$-Hölder continuous.

Therefore, $f$ is bi-Hölder continuous between $E_{M,\theta}(\alpha)$ and $f(E_{M,\theta}(\alpha))$. By standard fractal geometry (see Lemma \ref{lem:holder}, bi-Hölder continuous maps preserve Hausdorff dimension:
\begin{equation}
\dim_H E_{M,\theta}(\alpha) = \dim_H f(E_{M,\theta}(\alpha)). \label{3.07}
\end{equation}
Since $f(E_{M,\theta}(\alpha)) \subset X_M$, we have:
\begin{equation}
\dim_H f(E_{M,\theta}(\alpha)) \leq \dim_H X_M. \label{3.08}
\end{equation}
Combining (\ref{3.07}) and (\ref{3.08}):
\begin{equation}
\dim_H E_{M,\theta}(\alpha) \leq \dim_H X_M. \label{3.09}
\end{equation}

%

\emph{Reverse inequality.}  
We construct an explicit Lipschitz map $\Phi: X_M \to E_{M,\theta}(\alpha)$. Given $x \in X_M$ with digit expansion $x = [\ell_1, \ell_2, \ldots]_\theta$ (where all digits satisfy $m \le \ell_i \le M$), we define $\Phi(x)$ to be the number whose $\theta$-expansion is obtained by inserting at the sparse positions $n_k$ ($k \ge N_0$) the digits
\[
\ell_{n_k} = \left\lfloor \frac{\alpha \sum_{i=1}^{n_k-1} \ell_i}{\log n_k \log \log n_k} \right\rfloor,
\]
while keeping all other digits unchanged. This insertion rule is deterministic and depends only on the initial digits of $x$.

To see that $\Phi$ is Lipschitz, let $x_1, x_2 \in X_M$ with $x_1 \neq x_2$ and let $\bar{n}$ be the first index where their digits differ. Let $t$ be the number of insertion positions up to $\bar{n}$ (i.e., the number of $k \ge N_0$ with $n_k \le \bar{n}$). The images $y_1 = \Phi(x_1)$ and $y_2 = \Phi(x_2)$ agree on all digits up to position $\bar{n}+t$ (since the inserted digits depend only on the initial segments, which agree up to $\bar{n}$). The distance $|y_1-y_2|$ is then bounded by the length of the cylinder of depth $\bar{n}+t$ in $E_{M,\theta}(\alpha)$, which, using Proposition \ref{prop:basic}(b) and the estimates from Lemma \ref{lem:holder_continuity}, satisfies
\[
|y_1-y_2| \le \frac{\theta}{Q_{\bar{n}+t}^2(y_2)} \le C |x_1-x_2|
\]
for some absolute constant $C$. The last inequality follows because the denominator $Q_{\bar{n}+t}(y_2)$ is comparable to $Q_{\bar{n}}(x_2)$ up to a multiplicative factor depending only on the inserted digits, which are bounded by a fixed exponential function of $k$ (as shown in Lemma \ref{lem:holder_continuity}, Step 2). Since this factor is finite and independent of the specific points, we obtain Lipschitz continuity of $\Phi$.

Thus $\Phi: X_M \to E_{M,\theta}(\alpha)$ is a Lipschitz embedding, and by standard properties of Hausdorff dimension (Lipschitz maps do not increase dimension), we have
\begin{equation}
\dim_H X_M = \dim_H \Phi(X_M) \le \dim_H E_{M,\theta}(\alpha). \label{3.10}
\end{equation}
From (\ref{3.09}) and (\ref{3.10}) we conclude $\dim_H E_{M,\theta}(\alpha) = \dim_H X_M$.

\end{proof}

\subsection{Completion of the Proof}

\begin{proof}[Proof of Theorem \ref{thm:main}]
For $\alpha = 0$, the result is Lemma \ref{lem:alpha0}.

For $\alpha > 0$, we have
$E_{M,\theta}(\alpha) \subset E_\theta(\alpha)$ for all $M > 2m+1$ by Lemma \ref{lem:inclusion},
and $\dim_H E_{M,\theta}(\alpha) = \dim_H X_M$ by Lemma \ref{lem:dimension_preservation}.

{By the bounds in Theorem \ref{thm:jarnik}, $\dim_H X_M \to 1$ as $M \to +\infty$.}
Therefore:
\[
\dim_H E_\theta(\alpha) \geq \sup_{M > 2m+1} \dim_H E_{M,\theta}(\alpha) = \sup_{M > 2m+1} \dim_H X_M = 1.
\]
Since $E_\theta(\alpha) \subset [0, \theta]$, we conclude $\dim_H E_\theta(\alpha) = 1$.
\end{proof}

\section{Conclusion}

This work extends the classical result of Zhang and Lü \cite{ZhangLu} from regular continued fractions to the broader framework of $\theta$-expansions, demonstrating that the rich fractal structure of digit growth level sets persists in this more general setting.

\begin{remark}
The constructive method developed in this paper is quite flexible and can be adapted to study various other growth rate level sets in $\theta$-expansions. For instance, similar techniques can establish the full Hausdorff dimension of sets defined by limits of the form
\[
\frac{L_{n,\theta}(x) \log \log n}{S_{n-1,\theta}(x)} \quad \text{or} \quad \frac{S_{n,\theta}(x)}{n \log n}.
\]
In these cases, the same sparse sequence \(n_k = \lfloor\exp(k^{3/4})\rfloor\) can be used, as it provides the same control over the growth rates; only the definition of the inserted digits would need to be adjusted to match the desired limit. The proof of Hölder continuity and dimension preservation would then follow with only minor modifications.
However, we have focused specifically on extending the result of Zhang and Lü to maintain a clear and focused contribution.
\end{remark}

In conclusion, this work demonstrates that despite the strong almost-- everywhere constraints revealed by limit theorems, the level sets of digit growth rates in $\theta$-expansions remain large in the sense of Hausdorff dimension. This highlights the remarkable richness and complexity of number-theoretic exceptional sets, even in generalized continued fraction frameworks.

The interplay between the metric theory of dynamical systems and the fractal geometry of exceptional sets continues to yield deep insights into the structure of real numbers and their representations.

\end{document}